\documentclass[12pt]{article}
\usepackage{amsfonts}
\usepackage{mathrsfs,dsfont,bbm}

\usepackage[latin1]{inputenc}
\usepackage{amsmath,amssymb}
\usepackage{latexsym}
\usepackage{epstopdf}
\usepackage{geometry}
\usepackage[active]{srcltx}
\usepackage[
bookmarks=true,         
bookmarksnumbered=true, 
colorlinks=true, pdfstartview=FitV, linkcolor=blue, citecolor=blue,
urlcolor=blue]{hyperref}

 \topmargin -2cm
 \oddsidemargin -0.06cm
 \evensidemargin -0.06cm
 \textwidth 16.42cm
 \textheight 23.96cm
 \parskip 1.2pt

\newtheorem{theorem}{Theorem}[section]

\newtheorem{lemma}[theorem]{Lemma}
\newtheorem{proposition}[theorem]{Proposition}

\newtheorem{remark}[theorem]{Remark}

\numberwithin{equation}{section}
\parindent.5cm
\parskip0.2cm%

\newenvironment{proof}[1][Proof]{\noindent\textit{#1.} }{\hfill \rule{0.5em}{0.5em}}

\begin{document}

\title{Limit laws for functionals of self-intersection symmetric $\alpha$-stable processes}
\date{\today}

\author
{Minhao Hong, Qian Yu \footnote{Qian Yu is supported by  the Fundamental Research Funds for the Central Universities (NS2022072), National Natural Science Foundation of China (12201294) and Natural Science Foundation of Jiangsu Province, China (BK20220865).}}

\maketitle

\begin{abstract}
In this paper, we prove two limit laws for functionals of self-intersection symmetric $\alpha$-stable processes with $\alpha\in(1,2)$. The
results are obtained based on the method of moments, the sample configuration and the chaining argument introduced in \cite{NuXu} are employed.
\vskip.2cm \noindent {\bf Keywords:} limit law; $\alpha$-stable processes; self-intersection local time; method of moments.
\vskip.2cm \noindent {\it Subject Classification 2010: Primary 60F05; Secondary 60G52.}
\end{abstract}

\section{Introduction}\label{sec1}
Let $X=\{X_t, t\geq0\}$ be a symmetric $\alpha$-stable process in $\mathbb{R}$. If $\alpha>1$, then the local time of $X$ exists (see in \cite{Boy64}) and can be defined as
\begin{equation}\label{sec1-eq1.0}
l_t(x)=\int_0^t\delta(X_s-x)ds, ~~t\geq0, x\in\mathbb{R},
\end{equation}
where $\delta$ is the Dirac delta function.
For any integrable function $f:~ \mathbb{R}\to\mathbb{R}$, using the scaling property of $\alpha$-stable process and the continuity of the local
time, we can easily obtain the convergence in law in the space $C([0,\infty))$ as follows
\begin{equation}\label{sec1-eq1.1}
n^{\frac{1-\alpha}{\alpha}}\int_0^{nt}f(X_s)ds\overset{law}{\to}l_t(0)\int_{\mathbb{R}}f(x)dx, ~~\text{as}~ n\to\infty.
\end{equation}

If we add a condition $\int_{\mathbb{R}}f(x)dx=0$, we can see $n^{\frac{1-\alpha}{\alpha}}\int_0^{nt}f(X_s)ds$ converges to zero, as $n\to\infty$.
But multiply the left side of \eqref{sec1-eq1.1} by a factor $n^\beta$ ($\beta>0$), then the right side of \eqref{sec1-eq1.1} converges to a nonzero process.
This has been proved to be true, Rosen \cite{Ros91} showed
$$
n^{\frac{1-\alpha}{2\alpha}}\int_0^{nt}f(X_s)ds\overset{law}{\to}c_f W(l_t(0)), ~~\text{as}~ n\to\infty,
$$
where $W$ is a real-valued Brownian motion independent of $\alpha$-stable process $X$, $c_f$ is a constant dependent on $f$ and $\alpha$.

If we replace $X_s$ with $X_s-X_r$ ($0<r<s<t$) in \eqref{sec1-eq1.0}, we can define self-intersection local time of $X$ as
\begin{equation}\label{sec1-eq1.2}
L_t(x)=\int_0^t\int_0^s\delta(X_s-X_r-x)drds, ~~t\geq0, x\in\mathbb{R}.
\end{equation}
The self-intersection local time $L_t(0)$ measures the amount of time that the process $X$ spends intersecting itself on the time interval $[0,t]$. Under condition $\alpha\in(1,2)$, we give its existence in Proposition \ref{sec2-pro2.1}.

However, to the best of our knowledge, the limit laws for functionals of self-intersection symmetric
$\alpha$-stable processes has not been considered in the literature. Motivated by the aforementioned works, we will consider this problem in
this paper and the main results are as follows.
\begin{theorem}\label{Thm-first-order}
Suppose that $f$ is bounded and $\int_{\mathbb{R}}|xf(x)|<\infty$. Then, for any $t>0$,
$$\frac{1}{n^{2-\frac1\alpha}}\int_0^{nt}\int_0^uf(X_u-X_v)dvdu\overset{law}{\to}\left(\int_{\mathbb{R}}f(x)dx\right)L_t(0),$$
as $n\to\infty$, where $X$ is a symmetric $\alpha$-stable process with parameter $\alpha\in(1,2)$ and $L_t(0)$ is the self-intersection local time of $X$ at $0$.
\end{theorem}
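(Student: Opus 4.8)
The plan is to establish the convergence in law by the method of moments, after first using the $\alpha$-self-similarity of $X$ to rewrite the functional as an approximate self-intersection local time. For fixed $t$, the substitution $u=ns,\ v=nr$ together with the identity in law $\{X_{ns}-X_{nr}\}_{0<r<s<t}\overset{law}{=}\{n^{1/\alpha}(X_s-X_r)\}_{0<r<s<t}$ gives
\begin{equation*}
\frac{1}{n^{2-1/\alpha}}\int_0^{nt}\int_0^u f(X_u-X_v)\,dv\,du\overset{law}{=}\int_0^t\int_0^s g_n(X_s-X_r)\,dr\,ds=:F_n,
\end{equation*}
where $g_n(x)=n^{1/\alpha}f(n^{1/\alpha}x)$. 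Its Fourier transform is $\widehat{g_n}(\xi)=\widehat f(\xi/n^{1/\alpha})$ and its total mass is $\int_{\mathbb R}g_n=\int_{\mathbb R}f$, so $g_n$ is an approximate identity of mass $\int f$ and, heuristically, $g_n\to(\int_{\mathbb R}f)\,\delta$. The hypotheses force $f\in L^1$ (boundedness controls $f$ near the origin, and integrability of $|xf|$ controls it at infinity), whence $\widehat f$ is bounded by $\|f\|_{L^1}$ and continuous, with $\widehat{g_n}(\xi)\to\widehat f(0)=\int_{\mathbb R}f$ pointwise; the stronger condition $\int|xf|<\infty$ makes $\widehat f$ Lipschitz, which quantifies this convergence should a rate be needed.

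Next I would reduce the statement to the convergence of all moments $\mathbb E[F_n^m]\to\big(\int_{\mathbb R}f\big)^m\mathbb E[L_t(0)^m]$, $m\ge1$. Since $F_n\ge0$ and $L_t(0)\ge0$, and since the moments of $L_t(0)$ will be shown to grow no faster than $C^m(m!)^{\gamma}$ with $\gamma=2-1/\alpha<2$, the Stieltjes--Carleman condition $\sum_m\mathbb E[L_t(0)^m]^{-1/(2m)}=\infty$ holds; hence the law of $\big(\int f\big)L_t(0)$ is determined by its moments among distributions on $[0,\infty)$, and the method of moments (Fr\'echet--Shohat, in its one-sided version) applies.

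To compute the moments I expand the $m$-th power, order the $2m$ time variables, and apply Fourier inversion to each factor. Denoting by $I_k$ the elementary time intervals cut out by the ordered endpoints, with lengths $\tau_k$, and setting $\eta_k=\sum_{j:\,I_k\subset(r_j,s_j)}\xi_j$, the stationarity and independence of the increments of $X$ give the nonnegative kernel
\begin{equation*}
\Phi:=\mathbb E\Big[\prod_{j=1}^m e^{i\xi_j(X_{s_j}-X_{r_j})}\Big]=\exp\Big(-\sum_k\tau_k|\eta_k|^{\alpha}\Big)\ge0,
\end{equation*}
so that, with $D_m$ the $m$-fold product of the simplex $\{0<r<s<t\}$,
\begin{equation*}
\mathbb E[F_n^m]=\frac{1}{(2\pi)^m}\int_{D_m}\!\int_{\mathbb R^m}\Phi\prod_{j=1}^m\widehat{g_n}(\xi_j)\,d\xi\,dr\,ds,
\end{equation*}
and the same integral with every $\widehat{g_n}(\xi_j)$ replaced by $\int f$ equals $\big(\int f\big)^m\mathbb E[L_t(0)^m]$.

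The moment convergence then follows from dominated convergence: pointwise $\Phi\prod_j\widehat{g_n}(\xi_j)\to\Phi\,(\int f)^m$, while $|\Phi\prod_j\widehat{g_n}(\xi_j)|\le\|f\|_{L^1}^m\,\Phi$, whose integral over $D_m\times\mathbb R^m$ is $(2\pi)^m\|f\|_{L^1}^m\,\mathbb E[L_t(0)^m]$. The crux of the argument is therefore the finiteness, and the $m$-growth, of this integral, that is of $\mathbb E[L_t(0)^m]$ (Proposition \ref{sec2-pro2.1}), and this is where I expect the main obstacle. After the change of variables $\vec\xi\mapsto\vec\eta$ integrates out the frequencies, one is left with a multiple time integral over $D_m$ of a product of negative powers of the gaps $\tau_k$, which is singular where consecutive endpoints collide. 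Bounding it uniformly requires the sample-configuration bookkeeping and the chaining argument of \cite{NuXu}: one classifies the orderings of the $2m$ endpoints, estimates each configuration's contribution by a product taken along a spanning chain, and checks that the one-dimensional singularities are integrable precisely because $\alpha\in(1,2)$ (so the local exponent $1-1/\alpha<1$), while the combinatorics produce the factorial growth recorded above. Once Proposition \ref{sec2-pro2.1} supplies this bound, the dominated-convergence step closes the proof.
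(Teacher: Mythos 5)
Your argument is correct, but it is organized quite differently from the paper's. The paper Fourier-transforms $f$, freezes $\widehat f$ at the origin to define $G_n(t)$, proves $\mathbb{E}|F_n(t)-G_n(t)|^2\le c\,n^{-2/\alpha}$ by an explicit second-moment computation over the three time orderings (this is where $\int_{\mathbb{R}}|xf(x)|dx<\infty$ enters, through $|\widehat f(x)-\widehat f(0)|\le c|x|$), and then shows that $G_n(t)$ has \emph{exactly} the law of $\left(\int_{\mathbb{R}}f\right)L_t(0)$ for every $n$ by matching all moments via the scaling $\ell_j(nu,nv)=n\ell_j(u,v)$. You instead apply the self-similarity to the whole functional first, turning $F_n$ into $\int_0^t\int_0^s g_n(X_s-X_r)\,dr\,ds$ with $g_n$ an approximate identity, and then obtain convergence of every moment in a single dominated-convergence step, the domination being $\big|\prod_j\widehat f(\xi_j n^{-1/\alpha})\big|\le\|f\|_{L^1}^m$ against the nonnegative integrable kernel $\Phi$. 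Both routes rest on the same two pillars, namely the finiteness of $\int_{D_m}\int_{\mathbb{R}^m}\Phi$, i.e.\ $\mathbb{E}[(L_t(0))^m]<\infty$ (Lemma~\ref{sec2-lem2-Ltm}), and the method of moments; but yours dispenses with the $L^2$ difference estimate entirely and needs only continuity of $\widehat f$ at $0$ rather than its Lipschitz property, so in principle it proves the limit under the weaker hypothesis $f\in L^1$, at the price of giving no rate, whereas the paper's decomposition yields the quantitative bound $F_n-G_n\to0$ in $L^2$ at rate $n^{-1/\alpha}$ together with the exact distributional identity for $G_n$. Three small remarks: you rightly check moment determinacy, which the paper leaves implicit, though the bound coming from Lemma~\ref{sec2-lem2-Ltm} is $\mathbb{E}[(L_t(0))^m]=O\big(C^m(2m)!/\Gamma(1+m(2-\frac1\alpha))\big)=O\big(C^m(m!)^{1/\alpha}\big)$, sharper than your $(m!)^{2-1/\alpha}$ (either suffices for Carleman); the integrability condition on the gap singularities is $\frac1\alpha<1$, not ``$1-\frac1\alpha<1$''; and the Fourier-inversion/Fubini interchange you perform requires the same mollification justification as the paper's own manipulations, so it is not an additional gap.
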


\begin{theorem}\label{Thm-second-order}
Under the assumptions in Theorem \ref{Thm-first-order}, we further assume that  $\int_{\mathbb{R}}f(x)dx=0$. Then, for any $t>0$,
$$\frac{1}{n^{\frac{3\alpha-1}{2\alpha}}}\int_0^{nt}\int_0^uf(X_u-X_v)dvdu\overset{law}{\to}\left(\frac1{4\pi^2}\int_{\mathbb{R}}|\widehat{f}(x)|^2|x|^{-\alpha}dx\right)^{\frac12}Z(t),$$
as $n\to\infty$, where $\widehat{f}$ is the Fourier transform of $f$ and $Z(t)$ is a random variable with parameter $t>0$ and $\mathbb{E}(Z(t))^m=\frac{m!\Gamma^{\frac{m}{2}}(1-\frac1\alpha)\left(\int_{\mathbb{R}}e^{-|x|^\alpha}dx\right)^{\frac{m}{2}}}{\Gamma(\frac{3m}{2}+1-\frac{m}{2\alpha})}t^{\frac{3m}{2}-\frac{m}{2\alpha}}$ if $m$ is even, $\mathbb{E}(Z(t))^m=0$ if $m$ is odd.
\end{theorem}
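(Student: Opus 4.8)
The plan is to prove the convergence by the method of moments, reducing the statement to two assertions: that all moments of the normalized functional converge to the corresponding moments of $\big(\tfrac1{4\pi^2}\int_{\mathbb R}|\widehat f(x)|^2|x|^{-\alpha}dx\big)^{1/2}Z(t)$, and that the limiting moment sequence determines the law uniquely. I would settle determinacy first: since the even moments $\mathbb E(Z(t))^{2p}$ grow no faster than $C^{p}(2p)!/\Gamma(p(3-\tfrac1\alpha)+1)$, one checks that $(\mathbb E(Z(t))^{2p})^{-1/(2p)}$ does not decay, so Carleman's criterion $\sum_p(\mathbb E(Z(t))^{2p})^{-1/(2p)}=\infty$ holds and moment convergence upgrades to convergence in law. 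It then suffices to analyze the $m$-th moment. For this I would insert the Fourier inversion $f(y)=\tfrac1{2\pi}\int_{\mathbb R}\widehat f(\xi)e^{i\xi y}d\xi$ and use the Lévy–Khintchine identity
$$
\mathbb E\exp\Big(i\sum_{j=1}^{m}\xi_j(X_{u_j}-X_{v_j})\Big)=\exp\Big(-\int_{\mathbb R}\Big|\sum_{j=1}^{m}\xi_j\,\mathbf 1_{[v_j,u_j]}(s)\Big|^{\alpha}ds\Big),
$$
which rewrites the $m$-th moment as an integral over the frequencies $\xi_1,\dots,\xi_m$ and the $2m$ ordered endpoints, the exponent being the $L^\alpha$-energy of the step function $\sum_j\xi_j\mathbf 1_{[v_j,u_j]}$.

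The core of the argument is the classification of the relative positions of the $2m$ endpoints, the \emph{sample configuration} of \cite{NuXu}. The hypothesis $\int_{\mathbb R}f=\widehat f(0)=0$ is decisive: on any sub-interval covered by a single increment the attached frequency is driven toward $0$, where $\widehat f$ vanishes, so such intervals must be short and configurations with unmatched long intervals are suppressed (this is exactly what replaces the first–order scaling of Theorem \ref{Thm-first-order} by $n^{(3\alpha-1)/2\alpha}$). The surviving contributions come from configurations in which the $m$ intervals are matched in pairs sharing a long overlap on which the two frequencies nearly cancel, $\xi_j+\xi_k\approx0$. On such an overlap of length $r$ the reality of $f$ gives $\widehat f(\xi_j)\widehat f(\xi_k)\to|\widehat f(\xi_j)|^2$, the free near-zero frequency integrates to $\int_{\mathbb R}e^{-r|\zeta|^\alpha}d\zeta=r^{-1/\alpha}\int_{\mathbb R}e^{-|x|^\alpha}dx$, and the exposed short segment integrates to $\int_0^\infty e^{-s|\xi_j|^\alpha}ds=|\xi_j|^{-\alpha}$; together these yield the factor $\tfrac1{4\pi^2}\int_{\mathbb R}|\widehat f(\xi)|^2|\xi|^{-\alpha}d\xi$ per pair. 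Since a matching into pairs exists only when $m$ is even, the odd moments vanish, matching $\mathbb E(Z(t))^m=0$. For even $m$, combining the number of pairings with the integration of the pair locations over the time simplex $0<s_1<\cdots<s_{m/2}<t$ and the overlap-length integrals is what assembles the constant $\int_{\mathbb R}e^{-|x|^\alpha}dx$, the factor $\Gamma^{m/2}(1-\tfrac1\alpha)$, the combinatorial weight $m!/\Gamma(\tfrac{3m}{2}+1-\tfrac{m}{2\alpha})$, and the self-similar power $t^{\frac{3m}{2}-\frac{m}{2\alpha}}$, thereby reproducing $\mathbb E(Z(t))^m$.

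The principal difficulty is to show rigorously that every configuration other than the paired ones is negligible after normalization, and to justify interchanging the frequency and time integrations with the limit. I would handle this with the chaining argument of \cite{NuXu}: bounding $|\widehat f|$ by an integrable majorant and organizing the $2m$ endpoints into a nested hierarchy of blocks, one estimates a general configuration by summing geometric bounds over the resulting tree of scales and shows that any defect from the perfect pairing costs a positive power of $n$. The existence of $L_t(0)$ from Proposition \ref{sec2-pro2.1}, together with the moment bounds underlying Theorem \ref{Thm-first-order}, supplies the integrability needed to make these estimates uniform in $n$. The hard part will be precisely this uniform control of the error sum and the exact matching of constants in the dominant term, which is where the bulk of the technical work lies.
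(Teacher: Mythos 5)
Your proposal follows essentially the same route as the paper: the method of moments via the Fourier representation of the $m$-th moment, the classification of sample configurations of the $2m$ time points, the chaining argument of \cite{NuXu} to telescope from the general product $\prod_j\widehat f(z_j-z_{j-1})$ to the paired product $\prod_j|\widehat f(z_{2j-1})|^2$, and the identification of the dominant paired configurations yielding the factor $\frac1{4\pi^2}\int|\widehat f|^2|x|^{-\alpha}dx$ per pair with odd moments vanishing. Your explicit Carleman determinacy check is a small addition the paper leaves implicit, but otherwise the strategy, the key lemmas, and the source of each constant coincide with the paper's proof.
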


\begin{remark}\label{sec1-remark}
The assumption $\int_{\mathbb{R}}|xf(x)|dx<\infty$ implies that $f\in L^p(\mathbb{R})$ for any $p\geq1$, and
\begin{equation}\label{sec1-eq1.3}
\int_{|x|>1}|\widehat{f}(x)|^2|x|^{-\gamma}dx<\infty
\end{equation}
for any $\gamma>1/2$. In fact, the function $f$ is bounded and
\begin{align*}
\int_{\mathbb{R}}|f(x)|^pdx&\leq \int_{|x|\leq1}|f(x)|^pdx+\int_{|x|>1}|f(x)|^p|x| dx\\
&\leq |f(x)|^p+|f(x)|^{p-1}\int_{\mathbb{R}}|f(x)||x| dx<\infty.
\end{align*}
So,
\begin{align*}
\int_{|x|>1}|\widehat{f}(x)|^2|x|^{-\gamma}dx&\leq \left(\int_{|x|>1}|\widehat{f}(x)|^4dx\right)^{1/2}\left(\int_{|x|>1}|x|^{-2\gamma}dx\right)^{1/2}\\
&\leq c\left(\int_{|x|>1}|f(x)|^4dx\right)^{1/2}<\infty,
\end{align*}
where we use the Plancherel theorem and $f\in L^{4}(\mathbb{R})$.

Moreover, the additional condition $\int_{\mathbb{R}}f(x)dx=0$ gives
$$\left|\widehat{f}(x)-\int_{\mathbb{R}}f(x)dx\right|=|\widehat{f}(x)-\widehat{f}(0)|\leq c\left(|x|^\beta\wedge 1\right),  ~~\beta\in[0,1].$$
This means 
\begin{align}\label{sec1-eq1.3-1}
\int_{|x|\leq1}|\widehat{f}(x)|^2|x|^{-\alpha}dx
&\leq c\int_{|x|\leq1}|x|^{2\beta-\alpha}dx<\infty.
\end{align}
Together \eqref{sec1-eq1.3} and \eqref{sec1-eq1.3-1}, we have
$\int_{\mathbb{R}}|\widehat{f}(x)|^2|x|^{-\alpha}dx<\infty$.

\end{remark}

The study of self-intersection local time has attracted the attention of many scholars.
Hu \cite{Hu1996} discussed the exact smoothness of the self-intersection local time of Brownian motion in the sense of Meyer-Watanabe.
If the Brownian motion is replaced by a more general Gaussian process (fractional Brownian motion),
Hu and Nualart \cite{Hu2005} proved existence condition of the renormalized self-intersection local time for fractional Brownian motion, and given two central
limit theorems for nonexistence conditions. Other studies on self-intersection local time for Gaussian process can be referred to \cite{Hu1996}, \cite{Jara17}, \cite{Jung2014},  \cite{Jung2015},  \cite{Mar08}, \cite{Yan2015}, \cite{Yu21} and the references therein.

Moreover, the self-intersection local time for non-Gaussian case is also concerned by many scholars, especially the famous symmetric $\alpha$-stable processes. Rosen \cite{Ros05} showed the renormalized self-intersection local time is differentiable in the spatial variable, Nualart and Xu \cite{NuXu} proved two limit laws for functionals of one dimensional symmetric $\alpha$-stable process with $\alpha=1$.  Other studies on $\alpha$-stable process can be referred to \cite{Marcus1999}, \cite{Pes10}, \cite{Ros91}, \cite{Yan} and the references therein.

In this paper, we prove two limit laws for functionals of self-intersection symmetric $\alpha$-stable processes with $\alpha\in(1,2)$. However, when $\alpha=1$, the self-intersection local time for symmetric $\alpha$-stable process does not exist. If we set different normalization factors, we should also get two limit theorems similar to Theorems \ref{Thm-first-order} and \ref{Thm-second-order}, which will be considered in our next paper.

The rest of this paper   is organized as follows. We introduce some preliminary results about the symmetric $\alpha$-stable processes, show the existence of self-intersection local time $L_t$ and give some useful lemmas in Section \ref{sec2}. In Section \ref{sec3}, we give the proofs of the main results. Throughout this paper, if not mentioned otherwise, the letter $c$, with or without a subscript, denotes a generic positive finite constant and may change from line to line.

\section{Preliminaries}\label{sec2}

 In this section, we first recall the definition and properties of  $\alpha$-stable process. For background on all these notions, we refer to \cite{Applebaum}, \cite{Samorodnitsky}, \cite{Pip2017} and the  references therein.  Throughout this paper we fix a complete probability space $(\Omega,\mathcal{F},P, \mathcal{F}_{t})$ such that the processes considered are well-defined on the space.

Let the parameters $\alpha,\sigma,\beta,\mu$ satisfy
$$
\alpha\in(0,2],\quad\sigma\in(0,+\infty),\quad\beta\in[-1,1],\quad\mu\in(-\infty,+\infty),
$$
and denote
\begin{align*}
\phi_{\alpha}(u)=
\begin{cases}
-\sigma^{\alpha}|u|^{\alpha}\Big(1-\iota\beta \text{sgn}(u)\tan\frac{\alpha\pi}{2}\Big)+\iota\mu u,\ \ \ \ \ \  \  \  \alpha\neq1,\\
-\sigma|u|\Big(1+\iota\beta \frac{2}{\pi}\text{sgn}(u)\log|u|\Big)+\iota\mu u,\ \ \ \ \ \ \ \ \   \ \alpha=1,\\
\end{cases}
\end{align*}
with  $\iota^{2}=-1$. A random variable $\eta$ is said to have an $\alpha$-stable distribution, denoted by $\eta\sim S_{\alpha}(\sigma,\beta,\mu)$, if it has the characteristic function
$$\mathbb{E}e^{iu\eta}=e^{-\phi_{\alpha}(u)}.$$
These parameters $\alpha,\sigma,\beta,\mu$ are called the stability index, scale index, skewness index and location index, respectively. When $\mu=0$ and $\beta=0$, we say $\eta$ is symmetrically $\alpha$-stable,
its characteristic function is given by
\begin{equation*}
\mathbb{E}e^{\iota\theta \eta}=e^{-\sigma^{\alpha}|\theta|^{\alpha}},\qquad \theta\in\mathbb{R}.
\end{equation*}
for some scale parameter $\sigma>0$.
For any $\eta\sim S_{\alpha}(\sigma,\beta,\mu)$ with $0<\alpha<2$, we then have $\mathbb{E}|\eta|^{p}<\infty$ for all $0<p
<\alpha$ and $\mathbb{E}|\eta|^{p}=\infty$ for all $p\geq\alpha$. Moreover, if $\xi_{n}\sim S_{\alpha}(\sigma_n,\beta,\mu)$ for every $n\geq1$ and $\sigma_{n}\rightarrow\sigma>0~ (n\rightarrow\infty)$, then $\xi_{n}$ converges to an $\alpha$-stable random variable $\xi\sim S_{\alpha}(\sigma,\beta,\mu)$ in distribution, as $n$ tends to infinity.
An $\{\mathcal{F}_{t}\}$-adapted process $X=\{X_{t},t\geq0\}$ with all sample paths in $D[0,\infty]$ is said to be an $\alpha$-stable process  with $\alpha\in(0,2]$ if for any $t> s\geq0$,
$$\mathbb{E}[e^{\iota u(X_{t}-X_{s})}|\mathcal{F}_{s}]=e^{-(t-s)\phi_{\alpha}(u)},\quad u\in\mathbb{R},$$
where $\phi_{\alpha}(u)$ is called the L\'{e}vy symbol of $X$.  When $\mu=\beta=0$, the $\alpha$-stable process is called
 symmetric  $\alpha$-stable process and for any $t>s\geq0$,

\begin{equation}\label{sec2-eq2.2}
\mathbb{E}[e^{\iota u(X_t-X_s)}|\mathcal{F}_{s}]=e^{-(t-s)|u|^{\alpha}},\qquad u\in\mathbb{R}
\end{equation}
 where $X_0=0$.
More results for symmetric $\alpha$-stable process can be found in Bass and Khoshnevisan \cite{Bass1993}, Sun and Yan \cite{Sun2021},  Roger and Walsh \cite{Rog1991a, Rog1991b} and references therein.

Next, we will show the existence of self-intersection local time $L_t$.
By the definition of self-intersection local time of $X$ in \eqref{sec1-eq1.2}, we can set
$$f_\varepsilon(x)=\frac1{\sqrt{2\pi\varepsilon}}e^{-\frac{|x|^2}{2\varepsilon}}=\frac1{2\pi}\int_{\mathbb{R}}e^{\iota px}e^{-\varepsilon \frac{|p|^2}{2}}dp.$$
Since the Dirac delta function $\delta$ can be approximated by $f_\varepsilon(x)$, we can approximate  $L_t(x)$ by
\begin{equation}\label{sec2-eq2.3}
\widehat{L}_{t,\varepsilon}(x):=\int_{0<r<s<t}f_\varepsilon(X^H_s-X^H_r-x)drds, ~~\text{as}~~\varepsilon\to0.
\end{equation}

If $\widehat{L}_{t,\varepsilon}(x)$ converges to a random variable in $L^2$ as $\varepsilon\to0$, we denote the limit by $L_t(x)$ and call the self-intersection local time of $X$ exists in $L^2$.
\begin{proposition}\label{sec2-pro2.1}
If $1<\alpha<2$, then the self-intersection local time $L_t(x)$ exists in $L^2$.
\end{proposition}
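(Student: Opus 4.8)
The plan is to establish the $L^2$ convergence of the approximations $\widehat{L}_{t,\varepsilon}(x)$ in \eqref{sec2-eq2.3} by the method of moments, showing that $\{\widehat{L}_{t,\varepsilon}(x)\}_\varepsilon$ is a Cauchy family in $L^2$ as $\varepsilon\to0$. Inserting the Fourier representation $f_\varepsilon(y)=\frac1{2\pi}\int_{\mathbb R}e^{\iota py}e^{-\varepsilon|p|^2/2}dp$ into \eqref{sec2-eq2.3} and applying Fubini, I would write
$$\mathbb{E}\big[\widehat{L}_{t,\varepsilon}(x)\widehat{L}_{t,\varepsilon'}(x)\big]=\frac1{(2\pi)^2}\int_{D}\int_{\mathbb{R}^2}\Phi(p,q;\mathbf r)\,e^{-\iota(p+q)x}\,e^{-\frac{\varepsilon|p|^2+\varepsilon'|q|^2}{2}}\,dp\,dq\,d\mathbf r,$$
where $D=\{0<r<s<t\}\times\{0<r'<s'<t\}$, $d\mathbf r=dr\,ds\,dr'\,ds'$, and $\Phi(p,q;\mathbf r)=\mathbb{E}\big[e^{\iota p(X_s-X_r)+\iota q(X_{s'}-X_{r'})}\big]$. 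The first task is to evaluate $\Phi$ explicitly. Because $X$ has stationary and independent increments with the symmetric characteristic exponent \eqref{sec2-eq2.2}, for any fixed ordering of the four times $r,s,r',s'$ one decomposes the line into the elementary subintervals determined by these points, on each of which the total Fourier coefficient is one of $p$, $q$ or $p+q$; independence then yields $\Phi=\exp\big(-\sum_k \ell_k|c_k|^\alpha\big)$, where $\ell_k$ is the length of the $k$-th subinterval and $c_k\in\{p,q,p+q\}$ the corresponding coefficient.

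Next I would partition $D$ according to the relative order of $\{r,s,r',s'\}$, which, using the symmetry $(r,s)\leftrightarrow(r',s')$, reduces to three essentially distinct configurations: the disjoint case $r<s<r'<s'$, the overlapping case $r<r'<s<s'$, and the nested case $r<r'<s'<s$. Since $\Phi\ge0$, by Tonelli I may integrate in $p,q$ first; the one-dimensional identity $\int_{\mathbb R}e^{-a|p|^\alpha}dp=c_\alpha a^{-1/\alpha}$, together with the substitution $w=p+q$ wherever a coefficient $p+q$ appears, evaluates the inner integral in each region up to a product of negative powers of the inter-point gaps. Concretely, the disjoint case gives a factor $(s-r)^{-1/\alpha}(s'-r')^{-1/\alpha}$; the overlapping case is bounded, after dropping the middle factor $e^{-(s-r')|p+q|^\alpha}\le1$, by $(r'-r)^{-1/\alpha}(s'-s)^{-1/\alpha}$; and the nested case yields $(r'-r+s-s')^{-1/\alpha}(s'-r')^{-1/\alpha}$.

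The crux is then the integrability of these bounds over the time simplex, and this is exactly where the hypothesis $\alpha>1$ enters. Writing each region in terms of its base point and the gap lengths $a,b,c>0$, one checks that every bound is a product of factors of the form $(\mathrm{gap})^{-1/\alpha}$ and that $\int_0^t a^{-1/\alpha}\,da<\infty$ precisely because $1/\alpha<1$; in the nested case the two-dimensional integral $\int\!\int(a+c)^{-1/\alpha}\,da\,dc$ over a bounded region converges since $1/\alpha<2$. Hence $\int_D\int_{\mathbb R^2}\Phi\,dp\,dq\,d\mathbf r<\infty$ for $1<\alpha<2$. This finiteness is the main obstacle: assembling the correct exponents in the overlapping and nested configurations and verifying their joint time-integrability is the only delicate point, whereas the remaining steps are routine.

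Finally, the function $g:=\frac1{(2\pi)^2}\mathbf{1}_D\,\Phi$ dominates the modulus of the integrand uniformly in $\varepsilon,\varepsilon'$ (since $|e^{-\iota(p+q)x}|=1$ and $e^{-(\varepsilon|p|^2+\varepsilon'|q|^2)/2}\le1$) and is integrable by the above. Dominated convergence then shows that $\mathbb{E}[\widehat{L}_{t,\varepsilon}(x)\widehat{L}_{t,\varepsilon'}(x)]$ tends, as $\varepsilon,\varepsilon'\to0$ in any manner, to the single finite limit $L(x)=\frac1{(2\pi)^2}\int_D\int_{\mathbb R^2}\Phi\,e^{-\iota(p+q)x}\,dp\,dq\,d\mathbf r$. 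Taking in particular $\varepsilon'=\varepsilon$ as well, one obtains $\mathbb{E}[(\widehat{L}_{t,\varepsilon}(x)-\widehat{L}_{t,\varepsilon'}(x))^2]\to L(x)-2L(x)+L(x)=0$, so $\{\widehat{L}_{t,\varepsilon}(x)\}$ is Cauchy in $L^2$ and converges to a limit $L_t(x)\in L^2$, which is the desired self-intersection local time.
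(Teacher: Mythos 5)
Your proposal is correct and follows essentially the same route as the paper: the Fourier representation of $f_\varepsilon$, the reduction to the three time-orderings (disjoint, overlapping, nested), integration of the characteristic function over the Fourier variables to produce products of $(\mathrm{gap})^{-1/\alpha}$ factors, and integrability from $1/\alpha<1$. Your closing dominated-convergence argument, showing that all mixed second moments converge to a common finite limit so that the family is Cauchy in $L^2$, is in fact a more carefully justified version of the paper's terse "similarly, $\mathbb{E}|\widehat{L}_{t,\varepsilon}(x)-\widehat{L}_{t,\eta}(x)|^2\le c(\Lambda_1+\Lambda_2+\Lambda_3)$" step.
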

\begin{proof}
Let $\varepsilon>0$. By \eqref{sec2-eq2.3}, we have
\begin{align*}
\mathbb{E}|\widehat{L}_{t,\varepsilon}(x)|^2= \frac1{4\pi^2}\int_{\{0<r<s<t\}^2}\int_{\mathbb{R}^2}
e^{-\frac{\varepsilon}{2}(|p_1|^2+|p_2|^2)}\mathbb{E}e^{-\iota p_1(X_{s_1}-X_{r_1}-x)}e^{-\iota p_2(X_{s_2}-X_{r_2}-x)}dpdrds.
\end{align*}

For the set $\{0<r_1<s_1<t\}\times\{0<r_2<s_2<t\}$, there are six possibilities for the order of $r_j$ and $s_j$, $j=1,2$. Because of symmetry, we only need to consider three sets $E_{t,1}=\{0<r_1<s_1<r_2<s_2<t\}$, $E_{t,2}=\{0<r_1<r_2<s_1<s_2<t\}$ and $E_{t,3}=\{0<r_1<r_2<s_2<s_1<t\}$. Then we have
\begin{align*}
\mathbb{E}|\widehat{L}_{t,\varepsilon}(x)|^2
&\leq c\,\int_{E_{t,1}+E_{t,2}+E_{t,3}}\int_{\mathbb{R}^2}
\mathbb{E}e^{-\iota p_1(X_{s_1}-X_{r_1})}e^{-\iota p_2(X_{s_2}-X_{r_2})}dpdrds\\
&=:c(\Lambda_1+\Lambda_2+\Lambda_3).
\end{align*}
Similarly, for any $\varepsilon,\eta>0$, we have
\begin{align*}
	\mathbb{E}|\widehat{L}_{t,\varepsilon}(x)-\widehat{L}_{t,\eta}(x)|^2\le c(\Lambda_1+\Lambda_2+\Lambda_3).
\end{align*}
So we only need to show $\Lambda_j<\infty$, $j=1,2,3$, under the condition $\alpha>1$.

We first consider $\Lambda_1$. By \eqref{sec2-eq2.2}, on the set $E_{t,1}$,
$$\mathbb{E}e^{-\iota p_1(X_{s_1}-X_{r_1})}e^{-\iota p_2(X_{s_2}-X_{r_2})}=e^{-|p_1|^\alpha(s_1-r_1)-|p_2|^\alpha(s_2-r_2)}.$$
Then
\begin{align*}
\Lambda_1&\leq c\,\int_{E_{t,1}}\int_{\mathbb{R}^2}e^{-|p_1|^\alpha(s_1-r_1)-|p_2|^\alpha(s_2-r_2)}dpdrds\\
&\leq c\,\int_{E_{t,1}}(s_1-r_1)^{-\frac1\alpha}(s_2-r_2)^{-\frac1\alpha}drds,
\end{align*}
which is finite under condition $\alpha>1$.

Similarly, on the set $E_{t,2}$,
$$\mathbb{E}e^{-\iota p_1(X_{s_1}-X_{r_1})}e^{-\iota p_2(X_{s_2}-X_{r_2})}=e^{-|p_1|^\alpha(r_2-r_1)-|p_1+p_2|^\alpha(s_1-r_2)-|p_2|^\alpha(s_2-s_1)};$$
on the set $E_{t,3}$,
$$\mathbb{E}e^{-\iota p_1(X_{s_1}-X_{r_1})}e^{-\iota p_2(X_{s_2}-X_{r_2})}=e^{-|p_1|^\alpha(s_1-s_2+r_2-r_1)-|p_1+p_2|^\alpha(s_2-r_2)}.$$
Then
\begin{align*}
\Lambda_2&\leq c\,\int_{E_{t,2}}\int_{\mathbb{R}^2}e^{-|p_1|^\alpha(r_2-r_1)-|p_1+p_2|^\alpha(s_1-r_2)-|p_2|^\alpha(s_2-s_1)}dpdrds\\
&\leq c\,\int_{E_{t,2}}(r_2-r_1)^{-\frac1\alpha}(s_2-s_1)^{-\frac1\alpha}drds
\end{align*}
and
\begin{align*}
\Lambda_3&\leq c\,\int_{E_{t,3}}\int_{\mathbb{R}^2}e^{-|p_1|^\alpha(s_1-s_2+r_2-r_1)-|p_1+p_2|^\alpha(s_2-r_2)}dpdrds\\
&\leq c\,\int_{E_{t,3}}(s_1-s_2)^{-\frac1\alpha}(s_2-r_2)^{-\frac1\alpha}drds.
\end{align*}
Thus, we obtain the desired results under the condition $\alpha>1$.
\end{proof}

At the end of this section, we give some useful lemmas to prove the theorems in Section \ref{sec3}. Lemmas \ref{sec2-lem2-Ltm} will be used to prove Theorem  \ref{Thm-first-order},
the result of Lemmas \ref{sec2-lem1-m}-\ref{sec2-lem1-m'} plays a key role in the proof of Theorem  \ref{Thm-second-order}.

\begin{lemma}\label{sec2-lem2-Ltm}
Let $\ell_{0}=\ell_{0}(u,v)=0$ and $1<\alpha<2$. For any $m\in \mathbb{N}$, and $t\geq0$, we have
\begin{align*}
\mathbb{E}[(L_t(0))^m]=\frac{1}{(2\pi)^m}\int_{\{0<v<u<t\}^m}\int_{\mathbb{R}^{m}}\prod_{j=1}^{2m}e^{-|\sum\limits_{p=j}^{2m}y_p(x)|^{\alpha}(\ell_j(u,v)-\ell_{j-1}(u,v))}dxdvdu,
\end{align*}
where $\ell_{j}=\ell_{j}(u,v)$, $j=1,2,\cdots,2m$ are functions of $v_1, u_1, \cdots, v_m, u_m$ satisfying that  $\ell_1(u,v)\leq\ell_2(u,v)\leq\cdots\leq\ell_{2m}(u,v)$ is a relabeling of $\{v_1, u_1, \cdots, v_m, u_m\}$ and  $y_{j}=y_{j}(x)$, $j=1,2,\cdots,2m$ are functions of $x_1,\cdots,x_m$ satisfying
$$y_{j}(x)=\left\{\begin{array}{cl}
x_i, & \text{if} \,\,\ell_j=u_i, \\ -x_i, & \text{if} \,\,\ell_j=v_i,
\end{array}\right.$$ for $i=1,2, \cdots, m$ and $j=1,2,\cdots,2m$.
\end{lemma}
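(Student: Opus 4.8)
The plan is to approximate $L_t(0)$ by the mollified functional $\widehat{L}_{t,\varepsilon}(0)$ of \eqref{sec2-eq2.3}, to compute the $m$-th moment of the approximation exactly by Fourier analysis, and then to let $\varepsilon\to0$. Expanding $(\widehat{L}_{t,\varepsilon}(0))^m$ as a product of $m$ copies and using Fubini gives
$$\mathbb{E}[(\widehat{L}_{t,\varepsilon}(0))^m]=\int_{\{0<v<u<t\}^m}\mathbb{E}\Big[\prod_{i=1}^m f_\varepsilon(X_{u_i}-X_{v_i})\Big]\,dv\,du.$$
Substituting the Fourier representation $f_\varepsilon(y)=\frac1{2\pi}\int_{\mathbb{R}}e^{\iota x y}e^{-\varepsilon|x|^2/2}dx$ into each factor and interchanging expectation with integration reduces the problem to evaluating the joint characteristic function $\Phi(x,u,v):=\mathbb{E}\exp\big(\iota\sum_{i=1}^m x_i(X_{u_i}-X_{v_i})\big)$, so that
$$\mathbb{E}[(\widehat{L}_{t,\varepsilon}(0))^m]=\frac1{(2\pi)^m}\int_{\{0<v<u<t\}^m}\int_{\mathbb{R}^m}\Big(\prod_{i=1}^m e^{-\varepsilon|x_i|^2/2}\Big)\Phi(x,u,v)\,dx\,dv\,du.$$

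The core of the argument is the evaluation of $\Phi$ via the independent-increment structure of $X$. For fixed $(u,v)$ with distinct coordinates (the generic case), relabel the $2m$ times $\{v_1,u_1,\dots,v_m,u_m\}$ increasingly as $\ell_0=0<\ell_1<\cdots<\ell_{2m}$ and attach the signed weights $y_j(x)$ as in the statement. Writing each increment $X_{u_i}-X_{v_i}$ as the telescoping sum of the elementary increments $X_{\ell_j}-X_{\ell_{j-1}}$ over the sub-intervals it spans, I would establish the identity
$$\sum_{i=1}^m x_i(X_{u_i}-X_{v_i})=\sum_{j=1}^{2m}\Big(\sum_{p=j}^{2m}y_p(x)\Big)(X_{\ell_j}-X_{\ell_{j-1}}).$$
This holds because the coefficient of $X_{\ell_j}-X_{\ell_{j-1}}$ is $\sum_{i:\,v_i<\ell_j\le u_i}x_i$, and the partial sum $\sum_{p=j}^{2m}y_p(x)$ produces exactly this quantity: the index $i$ contributes $+x_i$ when $u_i\ge\ell_j$ and $-x_i$ when $v_i\ge\ell_j$, so $x_i$ survives precisely when its loop $[v_i,u_i]$ straddles $[\ell_{j-1},\ell_j]$ and cancels otherwise. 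Since the elementary increments are independent with $\mathbb{E}[e^{\iota\theta(X_{\ell_j}-X_{\ell_{j-1}})}]=e^{-|\theta|^\alpha(\ell_j-\ell_{j-1})}$ by \eqref{sec2-eq2.2}, the characteristic function factorizes as $\Phi(x,u,v)=\prod_{j=1}^{2m}e^{-|\sum_{p=j}^{2m}y_p(x)|^\alpha(\ell_j-\ell_{j-1})}$, which is the claimed integrand.

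It remains to pass to the limit $\varepsilon\to0$. Since $\prod_i e^{-\varepsilon|x_i|^2/2}\le1$ and tends to $1$ pointwise, dominated convergence shows that $\mathbb{E}[(\widehat{L}_{t,\varepsilon}(0))^m]$ converges to the right-hand side of the lemma, \emph{provided} the limiting ($\varepsilon=0$) integral is finite; this finiteness follows by splitting $\{0<v<u<t\}^m$ into the finitely many orderings of the $2m$ times and integrating out $x$, exactly as in the proof of Proposition \ref{sec2-pro2.1} but now with $2m$ sub-intervals. The same bound gives $\sup_{\varepsilon>0}\mathbb{E}[(\widehat{L}_{t,\varepsilon}(0))^m]<\infty$ for every $m$, hence uniform integrability of $\{(\widehat{L}_{t,\varepsilon}(0))^m\}_\varepsilon$; together with the $L^2$-convergence $\widehat{L}_{t,\varepsilon}(0)\to L_t(0)$ from Proposition \ref{sec2-pro2.1} (and thus convergence in probability), this upgrades to convergence of the $m$-th moments, so $\mathbb{E}[(L_t(0))^m]$ equals the stated integral.

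The main obstacle is the second step: reading off the increment coefficients $\sum_{p=j}^{2m}y_p(x)$ for \emph{all} orderings of the times simultaneously and tracking the sign cancellations, which the $\ell_j$/$y_j$ bookkeeping is designed to handle uniformly. A secondary but nontrivial technical point is the finiteness of the limiting integral needed for both the dominated-convergence interchange and the uniform-integrability argument, the delicate part being the control of those sub-intervals on which the coefficient $\sum_{p=j}^{2m}y_p(x)$ degenerates to zero and therefore contributes no decay in $x$.
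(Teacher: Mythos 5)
Your proposal follows essentially the same route as the paper's proof: mollify $L_t(0)$ by $\widehat{L}_{t,\varepsilon}(0)$, expand the $m$-th moment by Fourier inversion, evaluate the joint characteristic function by telescoping over the elementary increments $X_{\ell_j}-X_{\ell_{j-1}}$ (your coefficient identity $\sum_i x_i(X_{u_i}-X_{v_i})=\sum_j\big(\sum_{p=j}^{2m}y_p(x)\big)(X_{\ell_j}-X_{\ell_{j-1}})$ is exactly the paper's), and pass to the limit via finiteness of the $\varepsilon=0$ integral together with $L^2$-convergence and uniform integrability (a step you actually spell out more carefully than the paper does). The one piece you leave as an acknowledged obstacle --- choosing, for each of the $\frac{(2m)!}{2^m}$ orderings, $m$ sub-intervals whose coefficients do not degenerate --- is resolved in the paper by taking the positions $J(1)>\cdots>J(m)$ at which $\ell_j=u_i$ for some $i$: there the map $x\mapsto\big(\sum_{p=J(i)}^{2m}y_p(x)\big)_{i=1}^{m}$ is triangular with unit diagonal, hence a measure-preserving change of variables, the remaining $m$ exponential factors being simply bounded by $1$; with that selection rule supplied, your argument coincides with the paper's.
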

\begin{proof}
Let $\varepsilon>0$. By Fourier transform, we can see
\begin{align*}
\mathbb{E}[(\widehat{L}_{t,\varepsilon}(0))^m]&=\frac{1}{(2\pi)^m}\mathbb{E}\left(\int_0^t\int_0^u\int_{\mathbb{R}}e^{-\frac{\varepsilon}{2}|x|^2}e^{\iota x(X_u-X_v)}dxdvdu\right)^m\\
&=\frac{1}{(2\pi)^m}\int_{\{0<v<u<t\}^m}\int_{\mathbb{R}^{m}}\mathbb{E}\prod_{j=1}^me^{-\frac{\varepsilon}{2}|x_j|^2}e^{\iota x_j(X_{u_j}-X_{v_j})}dxdvdu\\
&=\frac{1}{(2\pi)^m}\int_{\{0<v<u<t\}^m}\int_{\mathbb{R}^{m}}\prod_{j=1}^me^{-\frac{\varepsilon}{2}|x_j|^2}\mathbb{E}\prod_{j=1}^{2m}e^{\iota (\sum\limits_{p=j}^{2m}y_p(x))\cdot(X_{\ell_j(u,v)}-X_{\ell_{j-1}(u,v)})}dxdvdu\\
&=\frac{1}{(2\pi)^m}\int_{\{0<v<u<t\}^m}\int_{\mathbb{R}^{m}}\prod_{j=1}^me^{-\frac{\varepsilon}{2}|x_j|^2}\prod_{j=1}^{2m}e^{-|\sum\limits_{p=j}^{2m}y_p(x)|^{\alpha}(\ell_j(u,v)-\ell_{j-1}(u,v))}dxdvdu,
\end{align*}
where in the last equality we use \eqref{sec2-eq2.2}. So that we only need to prove that
\begin{align*}
	\Lambda^{(m)}:=\int_{\{0<v<u<t\}^m}\int_{\mathbb{R}^{m}}\prod_{j=1}^{2m}e^{-|\sum\limits_{p=j}^{2m}y_p(x)|^{\alpha}(\ell_j(u,v)-\ell_{j-1}(u,v))}dxdvdu<\infty.
\end{align*}
Denote $A(u,v)$ as a subset of $\{1,2, \cdots, 2m\}$ such that if $j\in A(u,v)$, we have $y_j(x)=x_i$ for some $i\in\{1,2,\cdots,m\}$. It is easy to get that $A(u,v)$ contains $m$ elements and 
\begin{align}\label{Auv}
	A(u,v)=\{J(1),J(2),\cdots,J(m)\}\subseteq\{1,2, \cdots, 2m\}
\end{align} with $J(1)>J(2)>\cdots>J(m)$. Then we have
\begin{align*}
\Lambda^{(m)}\le\int_{\{0<v<u<t\}^m}\int_{\mathbb{R}^{m}}\prod_{i=1}^{m}e^{-|\sum\limits_{p=J(i)}^{2m}y_p(x)|^{\alpha}(\ell_{J(i)}(u,v)-\ell_{J(i)-1}(u,v))}dxdvdu.
\end{align*}
Let $J'(1)<\cdots<J'(m)$ be the numbers satisfying $J'(i)\notin A(u,v)$ and $J'(i)\in\{1,2,\cdots,2m\}$ for $i=1,2,\cdots m$ and denote
\begin{align}\label{A'uv}
A'(u,v)=\{J'(1),J'(2),\cdots,J'(m)\}.
\end{align}
Because we have $v_j<u_j$, $j=1,2,\cdots,m$, there are $\frac{(2m)!}{2^m}$ possibilities of permutations for $\{v_1, u_1, \cdots, v_m, u_m\}$. We divide the region $\{0<v<u<t\}^m$ by these $\frac{(2m)!}{2^m}$ permutations.  So
 $\ell_{j}(u,v)$ and $\sum\limits_{p=j}^{2m}y_p(x)$, $j=1,\cdots,2m$, will not change on each piece, where using coordinate tranform $z_i=\sum\limits_{p=J(i)}^{2m}y_p(x)$ and $u_i=\ell_{J(i)}(u,v)-\ell_{J(i)-1}(u,v)$ for $i=1,\cdots,m$, and $u_i=\ell_{J'(i)}(u,v)-\ell_{J'(i)-1}(u,v)$ for $i=m+1,\cdots,2m$, we get
\begin{align*}
\Lambda^{(m)}&\le\frac{(2m)!}{2^m}\int_{\{\sum_{i=1}^{2m}u_i<nt, u_i>0, i=1,2,\cdots,2m\}}\int_{\mathbb{R}^{m}}\prod_{i=1}^{m}e^{-|z_i|^{\alpha}u_i}dzdu.
\end{align*}
So denote
\begin{align}\label{O2m}
	O_{2m}=\{(u_1,u_2,\cdots, u_{2m}): ~\sum_{i=1}^{2m}u_i<nt, u_i>0, i=1,2,\cdots,2m\},
\end{align}
 we have
\begin{align*}
	\Lambda^{(m)}\le\frac{(2m)!}{2^m}\left(\int_{\mathbb{R}}e^{-|x|^\alpha}dx\right)^m\int_{O_{2m}}\prod_{j=1}^{m}u_j^{-\frac{1}{\alpha}}du.
\end{align*}
Then we get $\Lambda^{(m)}<\infty$ because $\frac{1}{\alpha}<1$, which completes the proof.
\end{proof}
\begin{lemma}\label{sec2-lem1-m}
	Let $m$ be even and $O_{2m}$ be that in \eqref{O2m}. If $f$ satisfies $\int_{\mathbb{R}}|xf(x)|dx<\infty$ and $\int_{\mathbb{R}}f(x)dx=0$, for $1<\alpha<2$,
	\begin{align*}
	\lim_{n\to\infty}\frac{1}{n^{m\frac{3\alpha-1}{2\alpha}}}\int_{\mathbb{R}^m}\int_{O_{2m}}\prod_{j=1}^{\frac{m}{2}}|\widehat{f}(z_{2j-1})|^2\prod_{j=1}^{m}e^{-|z_j|^\alpha u_j}&du dz \\=\frac{\Gamma^{\frac{m}{2}}(1-\frac1\alpha)\left(\int_{\mathbb{R}}e^{-|x|^\alpha}dx\right)^{\frac{m}{2}}}{\Gamma(\frac{3m}{2}+1-\frac{m}{2\alpha})}&t^{\frac{3m}{2}-\frac{m}{2\alpha}}\times\left(\int_{\mathbb{R}}|\widehat{f}(x)|^2|x|^{-\alpha}dx\right)^{\frac{m}{2}},
	\end{align*}
	where $du=du_1\cdots du_{2m}$, $dz=dz_1\cdots dz_m$ and $\Gamma(\cdot)$ denotes the Gamma function.
\end{lemma}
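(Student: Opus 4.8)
The plan is to integrate out the frequency variables $z$ first---which factorises completely---and thereby reduce the claim to the large-$R$ asymptotics of a weighted simplex integral with $R=nt$. In the integrand the factor $e^{-|z_j|^\alpha u_j}$ couples $z_j$ only to $u_j$, and the weights involve only the odd-indexed $z$'s, so by Tonelli each $z_j$ may be integrated separately. For an even index I use $\int_{\mathbb R}e^{-|z|^\alpha u}\,dz=\big(\int_{\mathbb R}e^{-|x|^\alpha}dx\big)u^{-1/\alpha}$, and for an odd index I introduce $g(u):=\int_{\mathbb R}|\widehat f(z)|^2e^{-|z|^\alpha u}\,dz$. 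This rewrites the whole integral as
\[
\Big(\int_{\mathbb R}e^{-|x|^\alpha}dx\Big)^{m/2}F(nt),\qquad
F(R):=\int_{\{u_i>0,\ \sum_{i=1}^{2m}u_i<R\}}\prod_{k=1}^{m/2}g(u_{2k-1})\prod_{k=1}^{m/2}u_{2k}^{-1/\alpha}\,du,
\]
so the entire problem becomes the behaviour of $F(R)$ as $R\to\infty$.

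The analytic key is that $g$ is integrable on $(0,\infty)$: by Tonelli,
\[
\int_0^\infty g(u)\,du=\int_{\mathbb R}|\widehat f(z)|^2\Big(\int_0^\infty e^{-|z|^\alpha u}\,du\Big)dz=\int_{\mathbb R}|\widehat f(z)|^2|z|^{-\alpha}\,dz=:C_f,
\]
which is finite by Remark \ref{sec1-remark}. Thus among the $2m$ variables of $F$, the $m/2$ odd ones carry integrable factors while the remaining $3m/2$ variables---the $m/2$ singular weights $u_{2k}^{-1/\alpha}$ and the $m$ ``free'' variables $u_{m+1},\dots,u_{2m}$ that appear only through the constraint---must spread over the simplex. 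To see the scaling, I rescale $u_i=nv_i$, turning the region into $\{v_i>0,\ \sum_{i=1}^{2m}v_i<t\}$ and producing
\[
F(nt)=n^{\,2m-m/(2\alpha)}\int_{\{v_i>0,\ \sum v_i<t\}}\prod_{k=1}^{m/2}g(nv_{2k-1})\prod_{k=1}^{m/2}v_{2k}^{-1/\alpha}\,dv.
\]
Since $\int_0^\infty g=C_f$, each measure $g(nv)\,dv$ concentrates at the origin with total mass $C_f/n$, so the odd variables are pinned to $0$, each contributing a factor $C_f/n$ and collapsing the constraint onto the $3m/2$ surviving variables.

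What remains is the Dirichlet integral over $\{\sum v<t\}$ of $\prod_{k=1}^{m/2}v_{2k}^{-1/\alpha}$ in the $3m/2$ surviving variables, which equals $\Gamma(1-1/\alpha)^{m/2}\,t^{\,3m/2-m/(2\alpha)}\big/\Gamma(1+3m/2-m/(2\alpha))$ by the standard Dirichlet formula (the $m/2$ exponents $1-1/\alpha$ together with $m$ exponents $1$). Collecting powers, $n^{2m-m/(2\alpha)}\cdot n^{-m/2}=n^{3m/2-m/(2\alpha)}=n^{m(3\alpha-1)/(2\alpha)}$ cancels the normalisation exactly, and the constants $(\int e^{-|x|^\alpha}dx)^{m/2}$, $C_f^{m/2}$ and the Dirichlet constant combine to the asserted limit.

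The main obstacle is to make the pinning/concentration step rigorous, i.e. to interchange $n\to\infty$ with the $2m$-fold integral in the presence of the integrable singularities $v_{2k}^{-1/\alpha}$ and the shrinking of the constraint region as the odd variables tend to $0$. I would either set up a dominated-convergence argument---bounding $g(nv_{2k-1})$ by an integrable envelope and showing the part of the simplex where the odd variables are non-negligible vanishes after normalisation---or, more cleanly, bypass the pinning via a Tauberian argument: $F$ is nondecreasing and its Laplace transform factorises as
\[
\int_0^\infty e^{-\lambda R}F(R)\,dR=\frac1\lambda\,\widehat g(\lambda)^{m/2}\big(\Gamma(1-1/\alpha)\lambda^{1/\alpha-1}\big)^{m/2}\lambda^{-m},
\]
with $\widehat g(\lambda)\to C_f$ as $\lambda\downarrow0$, hence it is regularly varying of index $-(1+3m/2-m/(2\alpha))$; Karamata's Tauberian theorem then delivers the same asymptotics for $F(R)$ and isolates the only delicate point in a single citation.
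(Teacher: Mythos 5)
Your computation is correct and is essentially the paper's proof: both arguments rest on the Tonelli factorization of the $z$-integrals, the identity $\int_0^\infty\int_{\mathbb{R}}|\widehat f(z)|^2e^{-|z|^\alpha u}\,dz\,du=\int_{\mathbb{R}}|\widehat f(z)|^2|z|^{-\alpha}\,dz$, and the Dirichlet formula applied to the $3m/2$ variables that do not carry a weight $|\widehat f|^2$. The only substantive difference is the order of operations, and it bears exactly on the step you single out as the ``main obstacle.'' The paper first integrates out, exactly, the $m/2$ unweighted $z$'s and then the $3m/2$ free $u$'s over the sub-simplex $\{\sum_{j=m/2+1}^{2m}u_j<nt-\sum_{j\le m/2}u_j\}$, producing the closed-form factor $\frac{\Gamma^{m/2}(1-1/\alpha)}{\Gamma(3m/2+1-m/(2\alpha))}\bigl(nt-\sum_{j\le m/2}u_j\bigr)^{3m/2-m/(2\alpha)}$; after dividing by $n^{3m/2-m/(2\alpha)}$ the remaining limit over the $m/2$ weighted pairs $(z_j,u_j)$ is a one-line dominated convergence. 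In your formulation the pinning step is likewise not delicate: substituting $w_k=nv_{2k-1}$ turns your rescaled integral into $n^{-m/2}$ times an integral of the $n$-independent nonnegative function $\prod_k g(w_k)\prod_k v_{2k}^{-1/\alpha}$ over the region $\{\sum_k w_k/n+\sum v_i<t\}$, which increases to $(0,\infty)^{m/2}\times\{\sum v_i<t\}$, so monotone convergence alone closes the gap --- neither an integrable envelope nor Karamata's Tauberian theorem is needed, though your Laplace-transform alternative is also valid and is the one genuinely different (if heavier) route you offer.
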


\begin{proof}
	We first have
	\begin{align*}
\frac{1}{n^{m\frac{3\alpha-1}{2\alpha}}}\int_{\mathbb{R}^m}\int_{O_{2m}}\prod_{j=1}^{\frac{m}{2}}|\widehat{f}(z_{2j-1})|^2\prod_{j=1}^{m}e^{-|z_j|^\alpha u_j}&du dz\\
=\frac{1}{n^{m\frac{3\alpha-1}{2\alpha}}}&\int_{\mathbb{R}^m}\int_{O_{2m}}\prod_{j=1}^{\frac{m}{2}}|\widehat{f}(z_{j})|^2\prod_{j=1}^{m}e^{-|z_j|^\alpha u_j}du dz.
	\end{align*}
	Because for $0\le a_j<1$, $j=1,2,\cdots,m$ and $t>0$, by Lemma 4.1 in \cite{hx}, we have $$\int_{\{\sum_{j=1}^{m}u_j<t,u_j>0,j=1,\cdots,m\}}\prod\limits_{j=1}^{m}u_j^{-a_j}du=\frac{\prod\limits_{j=1}^{m}\Gamma(1-a_j)}{\Gamma\big(1+\sum\limits_{j=1}^{m}(1-a_j)\big)}t^{\sum_{j=1}^{m}(1-a_j)},$$
	So performing integration with respect to $z_{\frac{m}{2}+1},\cdots,z_{m}$ and then with respect to $u_{\frac{m}{2}+1},\cdots,u_{2m}$, we get
	\begin{align*}
&\frac{1}{n^{m\frac{3\alpha-1}{2\alpha}}}\int_{\mathbb{R}^m}\int_{O_{2m}}\prod_{j=1}^{\frac{m}{2}}|\widehat{f}(z_{j})|^2\prod_{j=1}^{m}e^{-|z_j|^\alpha u_j}du dz\\
&\quad=\frac{\left(\int_{\mathbb{R}}e^{-|x|^\alpha}dx\right)^{\frac{m}{2}}}{n^{m\frac{3\alpha-1}{2\alpha}}}\int_{O_{\frac{m}{2}}}\int_{\{\sum\limits_{j=\frac{m}{2}+1}^{2m}u_j<nt-\sum\limits_{j=1}^{\frac{m}{2}}u_{j},u_j>0,j=\frac{m}{2}+1,\cdots,2m\}}\prod_{j=\frac{m}{2}+1}^{m}u_j^{-\frac{1}{\alpha}}\\
&\qquad\qquad\qquad\qquad\qquad\times\int_{\mathbb{R}^{\frac{m}{2}}}\prod_{j=1}^{\frac{m}{2}}|\widehat{f}(z_{j})|^2e^{-|z_{j}|^\alpha u_{j}}dzdu\\
&\quad=\frac{\left(\int_{\mathbb{R}}e^{-|x|^\alpha}dx\right)^{\frac{m}{2}}}{n^{m\frac{3\alpha-1}{2\alpha}}}\int_{O_{\frac{m}{2}}}\frac{\Gamma^{\frac{m}{2}}(1-\frac1\alpha)}{\Gamma(\frac{3m}{2}+1-\frac{m}{2\alpha})}\Big(nt-\sum_{j=1}^{\frac{m}{2}}u_{j}\Big)^{m\frac{3\alpha-1}{2\alpha}}\int_{\mathbb{R}^{\frac{m}{2}}}\prod_{j=1}^{\frac{m}{2}}|\widehat{f}(z_{j})|^2e^{-|z_{j}|^\alpha u_{j}}dzdu\\
&\quad=\frac{\Gamma^{\frac{m}{2}}(1-\frac1\alpha)\left(\int_{\mathbb{R}}e^{-|x|^\alpha}dx\right)^{\frac{m}{2}}}{\Gamma(\frac{3m}{2}+1-\frac{m}{2\alpha})}\int_{O_{\frac{m}{2}}}\Big(t-\frac{1}{n}\sum_{j=1}^{\frac{m}{2}}u_{j}\Big)^{m\frac{3\alpha-1}{2\alpha}}\int_{\mathbb{R}^{\frac{m}{2}}}\prod_{j=1}^{\frac{m}{2}}|\widehat{f}(z_{j})|^2e^{-|z_{j}|^\alpha u_{j}}dzdu.
	\end{align*}
Thus, by
$$\int_{(0,\infty)^{\frac{m}{2}}}\int_{\mathbb{R}^{\frac{m}{2}}}\prod_{j=1}^{\frac{m}{2}}|\widehat{f}(z_{j})|^2e^{-|z_{j}|^\alpha u_{j}}dzdu=\left(\int_0^{\infty}\int_{\mathbb{R}}|\widehat{f}(z)|^2e^{-|z|^{\alpha}u}dzdu\right)^{\frac{m}{2}}<\infty,$$
we get the desired result by letting $n\to\infty$ and using dominated convergence theorem.
\end{proof}

\begin{lemma}\label{sec2-lem1-m'}
	Let $m$ be even and $O_{2m}$ be that in \eqref{O2m}. If $f$ satisfies $\int_{\mathbb{R}}|xf(x)|dx<\infty$ and $\int_{\mathbb{R}}f(x)dx=0$, for $1<\alpha<2$, we have
	\begin{equation}\label{Lambdan}
		\begin{split}
		\lim_{n\to\infty}\Lambda_f(n):=\lim_{n\to\infty}\frac{1}{n^{m\frac{3\alpha-1}{2\alpha}}}&\int_{\mathbb{R}^m}\int_{O_{2m}}\prod_{j=1}^{\frac{m}{2}}|\widehat{f}(z_{2j-1})|^2\prod_{j=1}^{m}e^{-|z_j|^\alpha u_j}\\&\times e^{-\min\{|z_1+z_2|^{\alpha},|z_1+z_2+z_3|^{\alpha},|z_1+z_2+z_4|^{\alpha}\}u_{m+1}} du dz=0.
		\end{split}
	\end{equation}
\end{lemma}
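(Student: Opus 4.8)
\emph{Proof strategy.} The plan is to read this integral as the one in Lemma \ref{sec2-lem1-m} decorated with the extra factor $e^{-M(z)u_{m+1}}$, where $M(z)=\min\{|z_1+z_2|^{\alpha},|z_1+z_2+z_3|^{\alpha},|z_1+z_2+z_4|^{\alpha}\}$, and to show that this factor supplies a strictly negative power of $n$. First I would rescale by $u_i=ns_i$ $(1\le i\le 2m)$ and $z_j=n^{-1/\alpha}w_j$ $(1\le j\le m)$. Using the homogeneity $M(n^{-1/\alpha}w)=n^{-1}M(w)$ and the fact that $O_{2m}$ maps onto the fixed simplex $\Delta_t:=\{s_i>0,\ \sum_{i=1}^{2m}s_i<t\}$, one gets $\Lambda_f(n)=n^{\frac{m(\alpha-1)}{2\alpha}}J(n)$ with
\[
J(n)=\int_{\mathbb{R}^m}\int_{\Delta_t}\prod_{j=1}^{m/2}|\widehat f(n^{-1/\alpha}w_{2j-1})|^2\prod_{j=1}^m e^{-|w_j|^\alpha s_j}\,e^{-M(w)s_{m+1}}\,ds\,dw .
\]
Dropping $e^{-M(w)s_{m+1}}\le 1$ reproduces exactly the rescaled integral of Lemma \ref{sec2-lem1-m}, which times $n^{\frac{m(\alpha-1)}{2\alpha}}$ converges to a finite constant; so the whole point is to gain an extra negative power of $n$ from this factor. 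Note that the odd indices $1,3,\dots,m-1$ carry $|\widehat f|^2$, while the even indices are free, and that $z_1,z_3$ are of the former type, $z_2,z_4$ of the latter.

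The decay will be extracted from the $s_{m+1}$-integration. Fixing $\theta\in(0,1/\alpha)$ and bounding the inner $s_{m+1}$-integral (over a subinterval of $(0,t)$) by $\min(t,M(w)^{-1})\le t^{1-\theta}M(w)^{-\theta}$, together with $M(w)^{-\theta}\le |w_1+w_2|^{-\alpha\theta}+|w_1+w_2+w_3|^{-\alpha\theta}+|w_1+w_2+w_4|^{-\alpha\theta}$, I would split $J(n)$ into three pieces. The constraint $\alpha\theta<1$ keeps each inverse power locally integrable. In each piece the remaining variables decouple: the $s$-variables carrying no $w$ (namely $s_{m+2},\dots,s_{2m}$) integrate to bounded constants over $\Delta_t$; for each odd index $j$ not appearing in the chosen combination, $\int_0^\infty e^{-|w_j|^\alpha s_j}\,ds_j=|w_j|^{-\alpha}$ pairs with $|\widehat f(n^{-1/\alpha}w_j)|^2$ and, after $w_j=n^{1/\alpha}x$, contributes a factor $n^{1/\alpha-1}\int_{\mathbb{R}}|\widehat f(x)|^2|x|^{-\alpha}\,dx$ (finite by Remark \ref{sec1-remark}); and for each even index not in the combination, $\int_0^t e^{-|w_j|^\alpha s_j}\,ds_j\le\min(t,|w_j|^{-\alpha})$ is integrable in $w_j$ and yields a bounded constant.

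It remains to integrate the two or three variables appearing in the chosen combination. Here I would integrate the free variable(s) against the product of $\min(t,|\cdot|^{-\alpha})$ and the singular kernel $|\,\cdot\,|^{-\alpha\theta}$; this convolution is bounded by $C\min(1,|y|^{-\alpha\theta})$, where $y$ is the residual $\widehat f$-carrying combination. After rescaling $y$ by $n^{1/\alpha}$ and pairing with the remaining $|\widehat f|^2$-factors, the bound $\min(1,n^{-\theta}|x|^{-\alpha\theta})$ produces the decisive gain $n^{-\theta}$. Counting powers, the prefactor $n^{\frac{m(\alpha-1)}{2\alpha}}$ is cancelled exactly by the $m/2$ factors $n^{1/\alpha-1}$ (precisely as in Lemma \ref{sec2-lem1-m}), so that each of the three pieces is $O(n^{-\theta})\to 0$.

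The main obstacle is the near-singular region where $w_1+w_2$ (resp.\ $w_1+w_2+w_3$, $w_1+w_2+w_4$) is close to $0$, since there $e^{-M(w)s_{m+1}}$ provides no decay. This is exactly where the finite horizon enters: the cap $\min(t,\cdot)$ replaces the blow-up, the near-singular set has measure $O(n^{-1/\alpha})$ after rescaling, and the quantitative vanishing $|\widehat f(x)|\le c(|x|^\beta\wedge 1)$ from Remark \ref{sec1-remark}, with $\beta$ taken close to $1$, guarantees that the residual integrals, such as $\int|\widehat f(x)|^2|x|^{-\alpha-\alpha\theta}\,dx$ and $\int|\widehat f(x)|^2|\widehat f(-x)|^2|x|^{-2\alpha}\,dx$, converge for every $\alpha\in(1,2)$. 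Hence this region contributes an even smaller power of $n$, and summing the three pieces yields $\lim_{n\to\infty}\Lambda_f(n)=0$.
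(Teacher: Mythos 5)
Your proposal is correct in outline but proves the lemma by a genuinely different mechanism than the paper. Both arguments begin the same way: bound the factor $e^{-\min\{\cdot\}u_{m+1}}$ by the sum of the three individual contributions (you do this at the level of $M(w)^{-\theta}$ after integrating out $s_{m+1}$, the paper at the level of the exponentials themselves), integrate out all variables not appearing in the chosen linear combination using $\int_0^{\infty}\int_{\mathbb{R}}|\widehat f(z)|^2e^{-|z|^\alpha u}\,dz\,du<\infty$ and $\int_0^{nt}\int_{\mathbb{R}}e^{-|z|^\alpha u}\,dz\,du\le c\,n^{1-\frac1\alpha}$, and exploit the scaling $z\mapsto n^{1/\alpha}z$, $u\mapsto n^{-1}u$. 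The difference lies in how the extra factor is made to vanish. The paper rescales only the free (even-indexed) variables inside the combination, so that the extra exponential becomes $e^{-n|z_1+z_2/n^{1/\alpha}|^{\alpha}u_{m+1}}$, which tends to $0$ pointwise for $z_1\neq 0$; dominated convergence (with domination obtained by simply dropping that factor) finishes the proof with no rate. You instead extract a fractional power $M(w)^{-\theta}$ from the $s_{m+1}$-integral and push it through convolution estimates to obtain an explicit rate $O(n^{-\theta})$. Your route buys a quantitative decay rate, at the price of several integrability checks (finiteness of $\int_{\mathbb{R}}|\widehat f(x)|^2|x|^{-\alpha-\alpha\theta}dx$, the convolution bound $\int_{\mathbb{R}}\min(t,|w_2|^{-\alpha})|w_1+w_2|^{-\alpha\theta}dw_2\le C\min(1,|w_1|^{-\alpha\theta})$, and its two- and three-variable analogues) that the paper avoids entirely. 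One small caveat: the stated convolution bound needs $\theta$ small not only relative to $1/\alpha$ but also relative to $1-\frac1\alpha$, since the tail contribution of $\min(t,|w_2|^{-\alpha})$ over $\{|w_2|>|w_1|/2\}$ is of order $|w_1|^{1-\alpha}$, which dominates $|w_1|^{-\alpha\theta}$ unless $\alpha\theta\le\alpha-1$. Any fixed sufficiently small $\theta>0$ still yields the conclusion, so this does not invalidate the argument, but the restriction $\theta\in(0,1/\alpha)$ as written is not quite enough for the bound you claim.
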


\begin{proof} We first have
\begin{align*}
\Lambda_f(n)&\le\frac{1}{n^{m\frac{3\alpha-1}{2\alpha}}}\int_{\mathbb{R}^m}\int_{[0,nt]^{2m}}\prod_{j=1}^{\frac{m}{2}}|\widehat{f}(z_{2j-1})|^2\prod_{j=1}^{m}e^{-|z_j|^\alpha u_j} e^{-|z_1+z_2|^{\alpha}u_{m+1}} du dz\\
&\qquad+\frac{1}{n^{m\frac{3\alpha-1}{2\alpha}}}\int_{\mathbb{R}^m}\int_{[0,nt]^{2m}}\prod_{j=1}^{\frac{m}{2}}|\widehat{f}(z_{2j-1})|^2\prod_{j=1}^{m}e^{-|z_j|^\alpha u_j}e^{-|z_1+z_2+z_3|^{\alpha}u_{m+1}} du dz\\
&\qquad+\frac{1}{n^{m\frac{3\alpha-1}{2\alpha}}}\int_{\mathbb{R}^m}\int_{[0,nt]^{2m}}\prod_{j=1}^{\frac{m}{2}}|\widehat{f}(z_{2j-1})|^2\prod_{j=1}^{m}e^{-|z_j|^\alpha u_j}e^{-|z_1+z_2+z_4|^{\alpha}u_{m+1}} du dz\\
&=:\Lambda_f^{(1)}(n)+\Lambda_f^{(2)}(n)+\Lambda_f^{(3)}(n).
		\end{align*}
Because of the fact that  $$\int_0^{nt}\int_{\mathbb{R}}|\widehat{f}(z)|^2e^{-|z|^{\alpha}u}dzdu<\int_0^{\infty}\int_{\mathbb{R}}|\widehat{f}(z)|^2e^{-|z|^{\alpha}u}dzdu<\infty$$ and $$\frac{1}{n^{1-\frac{1}{\alpha}}}\int_0^{nt}\int_{\mathbb{R}}e^{-|z|^{\alpha}u}dzdu=\int_{\mathbb{R}}e^{-|z|^\alpha}dz\times\int_{0}^{t}u^{-\frac{1}{\alpha}}du<\infty,$$
we get that for $\Lambda_f^{(1)}(n)$,
\begin{align*}
	\Lambda_f^{(1)}(n)&\le\frac{c}{n^{2-\frac{1}{\alpha}}}\int_{\mathbb{R}^2}\int_{[0,nt]^3}|\widehat{f}(z_{1})|^2\prod_{j=1}^{2}e^{-|z_j|^\alpha u_j} e^{-|z_1+z_2|^{\alpha}u_{m+1}}dudz\\&\le c\int_{\mathbb{R}^2}\int_0^{\infty}\int_{[0,t]^2}|\widehat{f}(z_{1})|^2\prod_{j=1}^{2}e^{-|z_j|^\alpha u_j}e^{-n|z_1+z_2/n^{\frac{1}{\alpha}}|^{\alpha}u_{m+1}}du_2du_{m+1}du_1dz.
\end{align*}
where we change the coordinates $(u_{m+1}, u_2, z_2)$ by $(u_{m+1}n^{-1}, u_2n^{-1}, z_2n^{\frac1{\alpha}})$ in the second inequality.

Similarly, we have
\begin{align*}
	\Lambda_f^{(2)}(n)&\le\frac{c}{n^{2-\frac{1}{\alpha}}}\int_{\mathbb{R}^3}\int_{[0,nt]^4}|\widehat{f}(z_{1})|^2|\widehat{f}(z_{3})|^2\prod_{j=1}^{3}e^{-|z_j|^\alpha u_j}e^{-|z_1+z_2+z_3|^{\alpha}u_{m+1}}dudz\\&\le c\int_{\mathbb{R}^3}\int_{(0,\infty)^2}\int_{[0,t]^2}|\widehat{f}(z_{1})|^2|\widehat{f}(z_{3})|^2\prod_{j=1}^{3}e^{-|z_j|^\alpha u_j}e^{-n|z_1+z_2/n^{\frac{1}{\alpha}}+z_3|^{\alpha}u_{m+1}}du_2du_{m+1}du_1du_3dz,
\end{align*}
and
\begin{align*}
	\Lambda_f^{(3)}(n)&\le\frac{c}{n^{3-\frac{2}{\alpha}}}\int_{\mathbb{R}^3}\int_{[0,nt]^4}|\widehat{f}(z_{1})|^2\prod_{j=1}^{3}e^{-|z_j|^\alpha u_j}e^{-|z_1+z_2+z_3|^{\alpha}u_{m+1}}dudz\\&\le c\int_{\mathbb{R}^3}\int_0^{\infty}\int_{[0,t]^3}|\widehat{f}(z_{1})|^2\prod_{j=1}^{3}e^{-|z_j|^\alpha u_j}e^{-n|z_1+z_2/n^{\frac{1}{\alpha}}+z_3/n^{\frac{1}{\alpha}}|^{\alpha}u_{m+1}}du_2du_3du_{m+1}du_1dz.
\end{align*}
Then by dominated convergence theorem, we have $\lim\limits_{n\to\infty}\Lambda_f(n)=0$, since
$$\int_0^{\infty}\int_{\mathbb{R}}|\widehat{f}(z)|^2e^{-|z|^{\alpha}u}dzdu<\infty\text{ and }\int_0^{t}\int_{\mathbb{R}}e^{-|z|^{\alpha}u}dzdu<\infty.$$
\end{proof}

\section{Proof of the main results}\label{sec3}
In this section, we will give the proof of main results using preliminary results in Section \ref{sec2}.

\subsection{Proof of Theorem \ref{Thm-first-order}}\label{sec3.1}
For convenience, let
$$F_n(t):=\frac{1}{n^{2-\frac1\alpha}}\int_0^{nt}\int_0^uf(X_u-X_v)dvdu.$$
We need to show the limit law for $F_n(t)$, as $n\to\infty$, in this subsection.
By Fourier transform,
\begin{align*}
F_n(t)=\frac{1}{2\pi n^{2-\frac1\alpha}}\int_0^{nt}\int_0^u\int_{\mathbb{R}}\widehat{f}(x)e^{\iota x(X_u-X_v)}dxdvdu,
\end{align*}
where $\iota^2=-1$ and $\widehat{f}(x)=\int_{\mathbb{R}}f(\xi)e^{-\xi x}d\xi$ denotes the Fourier transform of $f(x)$.

Let $$G_n(t)=\frac{1}{2\pi n^{2-\frac1\alpha}}\int_0^{nt}\int_0^u\int_{\mathbb{R}}\widehat{f}(0)e^{\iota x(X_u-X_v)}dxdvdu.$$

We first show the difference of $F_n(t)$ and $G_n(t)$ converges to zero in $L^2$, then we only need to consider the limit law of $G_n(t)$, as $n\to\infty$.

\begin{lemma}\label{sec3.1-lem-FG}
As $n\to\infty$, we have
$$|F_n(t)-G_n(t)|\overset{L^2}{\to}0.$$
\end{lemma}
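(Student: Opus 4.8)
The plan is to prove $\mathbb{E}|F_n(t)-G_n(t)|^2\to0$ by a direct second-moment computation, combined with a scaling change of variables that absorbs the normalization $n^{-(2-\frac1\alpha)}$ and transplants the growing time window $[0,nt]$ onto the fixed window $[0,t]$; convergence then follows from dominated convergence, the integrable majorant being furnished by Proposition \ref{sec2-pro2.1}. First I would set $g(x):=\widehat f(x)-\widehat f(0)$ and write
\[
F_n(t)-G_n(t)=\frac{1}{2\pi n^{2-\frac1\alpha}}\int_0^{nt}\int_0^u\int_{\mathbb{R}}g(x)\,e^{\iota x(X_u-X_v)}\,dx\,dv\,du.
\]
The only properties of $g$ I would use are that it is bounded, continuous, and vanishes at the origin: writing $g(x)=\int_{\mathbb{R}}f(\xi)(e^{-\iota\xi x}-1)\,d\xi$ and using $|e^{-\iota\xi x}-1|\le\min(|\xi x|,2)$ together with $\int_{\mathbb{R}}|\xi f(\xi)|\,d\xi<\infty$ and $\int_{\mathbb{R}}|f|<\infty$ (both finite under the hypotheses, cf. Remark \ref{sec1-remark}) gives $|g(x)|\le c(|x|\wedge1)$. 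Since $f$ is real one has $\overline{g(x)}=g(-x)$, so $F_n(t)-G_n(t)$ is real-valued.

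Next I would expand the square and use \eqref{sec2-eq2.2} to compute the characteristic functions explicitly, obtaining
\[
\mathbb{E}|F_n(t)-G_n(t)|^2=\frac{1}{4\pi^2 n^{2(2-\frac1\alpha)}}\int_{\{0<v<u<nt\}^2}\int_{\mathbb{R}^2}g(x_1)\overline{g(x_2)}\,\Phi(x_1,x_2;u,v)\,dx\,dv\,du,
\]
where $\Phi:=\mathbb{E}[e^{\iota x_1(X_{u_1}-X_{v_1})}e^{-\iota x_2(X_{u_2}-X_{v_2})}]$. Exactly as in Proposition \ref{sec2-pro2.1}, I would split $\{0<v<u<nt\}^2$ into the three ordered regions $E_{nt,1},E_{nt,2},E_{nt,3}$ (the other three being symmetric) and evaluate $\Phi$ on each, producing the nonnegative products $e^{-|x_1|^\alpha(\cdot)}e^{-|x_2|^\alpha(\cdot)}$, $e^{-|x_1|^\alpha(\cdot)}e^{-|x_1-x_2|^\alpha(\cdot)}e^{-|x_2|^\alpha(\cdot)}$, and $e^{-|x_1|^\alpha(\cdot)}e^{-|x_1-x_2|^\alpha(\cdot)}$.

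The key step is the change of variables $u=n\tilde u,\ v=n\tilde v$ on $[0,t]$ together with $x_i=n^{-1/\alpha}\xi_i$. Each exponent transforms as $|x_i|^\alpha(u_k-v_k)=|n^{-1/\alpha}\xi_i|^\alpha\,n(\tilde u_k-\tilde v_k)=|\xi_i|^\alpha(\tilde u_k-\tilde v_k)$, so $\Phi$ becomes its scale-one counterpart $\Phi_0(\xi_1,\xi_2;\tilde u,\tilde v)$, while the Jacobians contribute $n^4\cdot n^{-2/\alpha}$, cancelling the factor $n^{-2(2-\frac1\alpha)}$ exactly. This yields
\[
\mathbb{E}|F_n(t)-G_n(t)|^2=\frac{1}{4\pi^2}\int_{\{0<\tilde v<\tilde u<t\}^2}\int_{\mathbb{R}^2}g(n^{-1/\alpha}\xi_1)\overline{g(n^{-1/\alpha}\xi_2)}\,\Phi_0(\xi_1,\xi_2;\tilde u,\tilde v)\,d\xi\,d\tilde v\,d\tilde u.
\]
I would then pass to the limit by dominated convergence. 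Since $|g|\le c$, the integrand is dominated by $c^2\Phi_0$, whose integral over the fixed region $[0,t]$ is finite: after integrating out $\xi$ one recovers verbatim the regional bounds $\Lambda_1+\Lambda_2+\Lambda_3<\infty$ of Proposition \ref{sec2-pro2.1} (the sign difference $x_1-x_2$ versus $p_1+p_2$ is immaterial after $\xi_2\mapsto-\xi_2$), which is exactly where $\alpha>1$ is used. For each fixed $(\xi,\tilde u,\tilde v)$, continuity of $g$ and $g(0)=0$ force $g(n^{-1/\alpha}\xi_i)\to0$, so the integrand tends to $0$ pointwise, and the conclusion follows.

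I expect the main obstacle to be producing an $n$-independent integrable majorant, and this is precisely what the scaling change of variables accomplishes: by moving the expanding window $[0,nt]$ back to $[0,t]$ it places the estimate in the regime where the second-moment bounds of Proposition \ref{sec2-pro2.1} are available. Once the normalization is absorbed this way, the vanishing of $g$ at the origin supplies the pointwise decay, and no cancellation beyond $g(0)=0$ is required.
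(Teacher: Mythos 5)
Your argument is correct, but it reaches the conclusion by a genuinely different route than the paper. The paper's proof is quantitative: it bounds $|\widehat f(x_i)-\widehat f(0)|\le c|x_i|$ using $\int_{\mathbb{R}}|xf(x)|dx<\infty$, integrates out the frequency variables on each ordered region $E_{nt,j}$ to produce negative powers of the time increments, and then integrates in time to get an explicit rate $\mathbb{E}|F_n(t)-G_n(t)|^2\le c\,n^{-2/\alpha}$. You instead rescale $(u,v,x)\mapsto(n\tilde u,n\tilde v,n^{-1/\alpha}\xi)$, which cancels the normalization exactly and moves the problem to the fixed domain $\{0<\tilde v<\tilde u<t\}^2$, and then invoke dominated convergence using only that $g=\widehat f-\widehat f(0)$ is bounded, continuous and vanishes at $0$, with the majorant $c^2|\Phi_0|$ integrable precisely because $\Lambda_1+\Lambda_2+\Lambda_3<\infty$ in Proposition \ref{sec2-pro2.1} (the sign discrepancy you flag is indeed immaterial). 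Each approach buys something: the paper's computation yields a rate of convergence, which yours does not, while your argument needs only $f\in L^1$ for this particular lemma and, more importantly, sidesteps a delicate point in the paper's own estimate --- trading $\int_{\mathbb{R}}|x|e^{-|x|^\alpha s}dx$ for $c\,s^{-2/\alpha}$ produces the factor $(u_1-v_1)^{-2/\alpha}$ with $2/\alpha>1$ when $\alpha\in(1,2)$, which is not locally integrable near the diagonal, so the paper's intermediate inequalities really require the sharper bound $c(|x_i|\wedge 1)$ (hence $s^{-1/\alpha}\wedge s^{-2/\alpha}$) to be literally finite. Your DCT route never converts the $x$-integral into a power of a time increment, so the issue does not arise; the price is that no rate survives.
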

\begin{proof}
By the definitions of $F_n(t)$ and $G_n(t)$,
\begin{align*}
&\mathbb{E}|F_n(t)-G_n(t)|^2\\
&=\frac{1}{(2\pi n^{2-\frac1\alpha})^2}\int_{D^2_{nt}\times D^2_{nt}}\int_{\mathbb{R}^2}|\widehat{f}(x_1)-\widehat{f}(0)||\widehat{f}(x_2)-\widehat{f}(0)|
\mathbb{E}\left[e^{\iota x_1(X_{u_1}-X_{v_1})}e^{\iota x_2(X_{u_2}-X_{v_2})}\right]dxdvdu\\
&\leq \frac{c}{n^{4-\frac2\alpha}}\int_{E_{nt,1}+E_{nt,2}+E_{nt,3}}\int_{\mathbb{R}^2}|x_1||x_2|
\mathbb{E}\left[e^{\iota x_1(X_{u_1}-X_{v_1})}e^{\iota x_2(X_{u_2}-X_{v_2})}\right]dxdvdu\\
&=:F_{n,1}(t)+F_{n,2}(t)+F_{n,3}(t),
\end{align*}
where $D^2_{nt}=\{0<v<u<nt\}$, $E_{nt,1}=\{0<v_1<u_1<v_2<u_2<nt\}$, $E_{nt,2}=\{0<v_1<v_2<u_1<u_2<nt\}$ and $E_{nt,3}=\{0<v_1<v_2<u_2<u_1<nt\}$.

For $F_{n,1}(t)$, on the set $E_{nt,1}$,
$$\mathbb{E}\left[e^{\iota x_1(X_{u_1}-X_{v_1})}e^{\iota x_2(X_{u_2}-X_{v_2})}\right]=e^{-|x_1|^\alpha(u_1-v_1)-|x_2|^\alpha(u_2-v_2)}.$$
Then
\begin{align*}
F_{n,1}(t)&\leq c\frac{1}{n^{4-\frac2\alpha}}\int_{E_{nt,1}}\int_{\mathbb{R}^2}|x_1||x_2|e^{-|x_1|^\alpha(u_1-v_1)-|x_2|^\alpha(u_2-v_2)}dxdvdu\\
&\leq c\frac{1}{n^{4-\frac2\alpha}}\int_{E_{nt,1}}(u_1-v_1)^{-\frac2\alpha}(u_2-v_2)^{-\frac2\alpha}dvdu\\
&\leq c\frac{1}{n^{4-\frac2\alpha}}\int_0^{nt}\int_0^{u_2}(u_2-v_2)^{-\frac2\alpha}v_2^{2-\frac2\alpha}dv_2du_2\\
&\leq c\,n^{-\frac2\alpha}.
\end{align*}

Similarly, we have
\begin{align*}
F_{n,2}(t)&\leq c\frac{1}{n^{4-\frac2\alpha}}\int_{E_{nt,2}}\int_{\mathbb{R}^2}|x_1||x_2|e^{-|x_1|^\alpha(v_2-v_1)-|x_1+x_2|^{\alpha}(u_1-v_2)-|x_2|^\alpha(u_2-u_1)}dxdvdu\\
&\leq c\frac{1}{n^{4-\frac2\alpha}}\int_{E_{nt,2}}(v_2-v_1)^{-\frac2\alpha}(u_2-u_1)^{-\frac2\alpha}dvdu\\
&\leq c\,n^{-\frac2\alpha}.
\end{align*}
and
\begin{align*}
F_{n,3}(t)&\leq c\frac{1}{n^{4-\frac2\alpha}}\int_{E_{nt,3}}\int_{\mathbb{R}^2}|x_1||x_2|e^{-|x_1|^\alpha(u_1-u_2+v_2-v_1)-|x_1+x_2|^\alpha(u_2-v_2)}dxdvdu\\
&\leq c\frac{1}{n^{4-\frac2\alpha}}\int_{E_{nt,3}}(u_1-u_2)^{-\frac2\alpha}\left(\min\{u_2-v_2,v_2-v_1\}\right)^{-\frac2\alpha}dvdu\\
&\leq c\frac{1}{n^{4-\frac2\alpha}}\int_0^{nt}\int_0^{u_1}(u_1-u_2)^{-\frac2\alpha}u_2^{2-\frac2\alpha}du_2du_1\\
&\leq c\,n^{-\frac2\alpha}.
\end{align*}
Together this three estimates gives the desired result.
\end{proof}

In the following result, we show the limit law of $G_n(t)$, as $n\to\infty$.

\begin{proposition}\label{sec3.1-prop}
Suppose that $f$ is bounded and $\int_{\mathbb{R}}|xf(x)|dx<\infty$. Then for any $t>0$,
$$G_n(t)\overset{law}{=}\left(\int_{\mathbb{R}}f(x)dx\right)L_t(0),$$ where $L_t(0)$ is the self-intersection local time of $X$ at $0$.
\end{proposition}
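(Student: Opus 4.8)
The plan is to identify $G_n(t)$ as a deterministic multiple of the self-intersection local time of $X$ over the rescaled time interval $[0,nt]$, and then to transfer the scaling factor onto the time variable using the self-similarity of the $\alpha$-stable process; in this way the claimed equality in law holds exactly for every fixed $n$, not merely in the limit. Since $\widehat{f}(0)=\int_{\mathbb{R}}f(x)\,dx$ and, in the sense of the regularization recalled below, $\frac1{2\pi}\int_{\mathbb{R}}e^{\iota x(X_u-X_v)}\,dx=\delta(X_u-X_v)$, one reads off
$$G_n(t)=\Big(\int_{\mathbb{R}}f(x)\,dx\Big)\frac{1}{n^{2-\frac1\alpha}}\int_0^{nt}\int_0^u\delta(X_u-X_v)\,dv\,du=\Big(\int_{\mathbb{R}}f(x)\,dx\Big)\frac{1}{n^{2-\frac1\alpha}}L_{nt}(0).$$
The proposition then reduces to the scaling identity $n^{-(2-1/\alpha)}L_{nt}(0)\overset{law}{=}L_t(0)$.

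To avoid manipulating the Dirac delta directly, I would carry out both steps at the level of the Gaussian regularization $f_\varepsilon$ introduced before Proposition \ref{sec2-pro2.1}. Defining $G_n(t)$ through the same approximation and using $\widehat{L}_{nt,\varepsilon}(0)\overset{L^2}{\to}L_{nt}(0)$ gives $G_n(t)=\big(\int_{\mathbb{R}}f(x)\,dx\big)\,n^{-(2-1/\alpha)}L_{nt}(0)$ as an $L^2$-limit, which legitimizes the display above.

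The heart of the argument is the self-similarity of $X$. From \eqref{sec2-eq2.2} the process $\{X_{ns}\}_{s\ge0}$ has the same law as $\{n^{1/\alpha}X_s\}_{s\ge0}$. Substituting $s=n\sigma$, $r=n\rho$ in $\widehat{L}_{nt,\varepsilon}(0)=\int_{0<r<s<nt}f_\varepsilon(X_s-X_r)\,dr\,ds$ and using the Gaussian scaling $f_\varepsilon(n^{1/\alpha}y)=n^{-1/\alpha}f_{\varepsilon n^{-2/\alpha}}(y)$ gives, for each fixed $\varepsilon$,
$$\widehat{L}_{nt,\varepsilon}(0)\overset{law}{=}n^{2-\frac1\alpha}\,\widehat{L}_{t,\varepsilon n^{-2/\alpha}}(0).$$
Letting $\varepsilon\to0$ (whence $\varepsilon n^{-2/\alpha}\to0$ as well) and using that both sides converge in $L^2$ by Proposition \ref{sec2-pro2.1}, the equality in law is preserved in the limit, so $L_{nt}(0)\overset{law}{=}n^{2-1/\alpha}L_t(0)$. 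Combined with the identification of $G_n(t)$ this yields the assertion.

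The main obstacle is the bookkeeping around the formal $\delta$-identity: one must ensure the regularizations used to define $L_t(0)$ and to read off $G_n(t)$ are the same, and that the self-similarity is invoked as a path-level identity in distribution, so that the full double time integral, rather than a single marginal, scales correctly. Once the equality in law is established for each $\varepsilon>0$, the only analytic input is the $L^2$-existence from Proposition \ref{sec2-pro2.1}, and no additional moment estimate is required.
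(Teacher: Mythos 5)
Your argument is correct, but it takes a genuinely different route from the paper. The paper proves the proposition by the method of moments: it computes $\mathbb{E}[(G_n(t))^m]$ explicitly through the characteristic function \eqref{sec2-eq2.2}, obtaining the same integral as in Lemma \ref{sec2-lem2-Ltm} over $\{0<v<u<nt\}^m\times\mathbb{R}^m$, and then performs the deterministic change of variables $u\mapsto nu$, $v\mapsto nv$, $x\mapsto n^{-1/\alpha}x$ (equivalently, uses $\ell_j(nu,nv)=n\ell_j(u,v)$ and $y_j(n^{-1}x)=n^{-1}y_j(x)$) to absorb the normalization $n^{2m-m/\alpha}$ and identify the result with $(\widehat{f}(0))^m\mathbb{E}[(L_t(0))^m]$. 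You instead lift the same scaling to the level of the process: you identify $G_n(t)$ with $\big(\int_{\mathbb{R}}f\big)\,n^{-(2-1/\alpha)}L_{nt}(0)$ through the Gaussian regularization and then invoke the path-level self-similarity $\{X_{ns}\}\overset{law}{=}\{n^{1/\alpha}X_s\}$ together with the $L^2$-convergence of $\widehat{L}_{\cdot,\varepsilon}$ from Proposition \ref{sec2-pro2.1}. Both proofs rest on the same scaling invariance, but yours buys two things: it establishes the equality in law directly, without needing the (unstated in the paper) fact that the moments of $L_t(0)$ determine its distribution, and it makes transparent that the identity is exact for every $n$ rather than asymptotic. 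What the paper's moment computation buys in exchange is uniformity with the rest of Section \ref{sec3}: the same moment formula and the same relabeling machinery ($\ell_j$, $y_j$, the sets $A(u,v)$) are reused verbatim in the proof of Theorem \ref{Thm-second-order}, where a pathwise identification is no longer available. One small point to make explicit if you write this up: since $G_n(t)$ is only defined by a formally divergent $dx$-integral, you should state that you take its definition to be the $L^2$-limit of the $e^{-\varepsilon|x|^2/2}$-regularized expression, which is consistent with how the paper manipulates it (the moments computed there are exactly those of $\widehat{f}(0)\,n^{-(2-1/\alpha)}L_{nt}(0)$).
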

\begin{proof}
 For any $m\in\mathbb{N}$,
\begin{align*}
\mathbb{E}\left[(G_n(t))^m\right]&=\frac{(\widehat{f}(0))^m}{(2\pi)^mn^{2m-\frac{m}{\alpha}}}\mathbb{E}\left(\int_{\{0<v<u<nt\}}\int_{\mathbb{R}}e^{\iota x(X_u-X_v)}dxdvdu\right)^m\\&=\frac{(\widehat{f}(0))^m}{(2\pi)^mn^{2m-\frac{m}{\alpha}}}\int_{\{0<v<u<nt\}^m}\int_{\mathbb{R}^{m}}\mathbb{E}\prod_{j=1}^me^{\iota x_j(X_{u_j}-X_{v_j})}dxdvdu.
\end{align*}
Using the notations $\ell_{j}=\ell_{j}(u,v)$, $j=0,1,2,\cdots,2m$ and $y_{j}=y_{j}(x)$, $j=1,2,\cdots,2m$ in Lemma \ref{sec2-lem2-Ltm}, we have
\begin{align*}
	\mathbb{E}\left[(G_n(t))^m\right]&=\frac{(\widehat{f}(0))^m}{(2\pi)^mn^{2m-\frac{m}{\alpha}}}\int_{\{0<v<u<nt\}^m}\int_{\mathbb{R}^{m}}\mathbb{E}\prod_{j=1}^{2m}e^{\iota (\sum\limits_{p=j}^{2m}y_p(x))\cdot(X_{\ell_j(u,v)}-X_{\ell_{j-1}(u,v)})}dxdvdu\\&=\frac{(\widehat{f}(0))^m}{(2\pi)^mn^{2m-\frac{m}{\alpha}}}\int_{\{0<v<u<nt\}^m}\int_{\mathbb{R}^{m}}\prod_{j=1}^{2m}e^{-|\sum\limits_{p=j}^{2m}y_p(x)|^{\alpha}(\ell_j(u,v)-\ell_{j-1}(u,v))}dxdvdu\\&=\frac{(\widehat{f}(0))^m}{(2\pi)^m}\int_{\{0<v<u<t\}^m}\int_{\mathbb{R}^{m}}\prod_{j=1}^{2m}e^{-|\sum\limits_{p=j}^{2m}y_p(x)|^{\alpha}(\ell_j(u,v)-\ell_{j-1}(u,v))}dxdvdu,
\end{align*}
where in the last equality we use the fact that $\ell_{j}(nu,nv)=n\ell_{j}(u,v)$ and $y_{j}(\frac{1}{n}x)=\frac{1}{n}y_{j}(x)$, $j=1,2,\cdots,2m$. So that by Lemma \ref{sec2-lem2-Ltm},
\begin{align*}
\mathbb{E}\left[(G_n(t))^m\right]=(\widehat{f}(0))^m\mathbb{E}[(L_t(0))^m]
=\left(\int_{\mathbb{R}}f(x)dx\right)^m\mathbb{E}[(L_t(0))^m].
\end{align*}
Thus, the proposition is proved by the method of moments.
\end{proof}

\subsection{Proof of Theorem \ref{Thm-second-order}}\label{sec3.2}
In this section, we will prove Theorem \ref{Thm-second-order}. To make notations simpler, we will abuse some notations from Section \ref{sec3.2}.

Let $$F_n(t)=\frac{1}{n^{\frac32-\frac{1}{2\alpha}}}\int_0^{nt}\int_0^uf(X_u-X_v)dvdu.$$

By Fourier transform,
\begin{align*}
F_n(t)=\frac{1}{2\pi n^{\frac32-\frac1{2\alpha}}}\int_0^{nt}\int_0^u\int_{\mathbb{R}}\widehat{f}(x)e^{\iota x(X_u-X_v)}dxdvdu.
\end{align*}
Then for $m\in\mathbb{N}$, the moment of $F_n(t)$ is
\begin{align*}
\mathbb{E}(F_n(t))^m=\frac{1}{(2\pi)^m n^{\frac{3m}{2}-\frac{m}{2\alpha}}}\int_{\{0<v<u<nt\}^m}\int_{\mathbb{R}^m}\prod_{j=1}^m\widehat{f}(x_j)\mathbb{E}e^{\iota \sum_{j=1}^mx_j(X_{u_j}-X_{v_j})}dxdvdu.
\end{align*}
Define $\tilde{\mathcal{P}}_{2m}$ to be the set of all permutations of $\{v_1, u_1, \cdots, v_m, u_m\}$ with $v_i$ ahead of $u_i$ for $i=1,2,\cdots,m$. For each $\tilde{\sigma}\in\tilde{\mathcal{P}}_{2m}$, let $\ell_1^{\tilde{\sigma}}\leq\ell_2^{\tilde{\sigma}}\leq\cdots\leq\ell_{2m}^{\tilde{\sigma}}$ be relabeling of $\{v_1, u_1, \cdots, v_m, u_m\}$. On the set $D^{\tilde{\sigma}}_{nt}=\{(\ell_1^{\tilde{\sigma}},\ell_2^{\tilde{\sigma}},\cdots,\ell_{2m}^{\tilde{\sigma}}):0<\ell_1^{\tilde{\sigma}}\leq\ell_2^{\tilde{\sigma}}\leq\cdots\leq\ell_{2m}^{\tilde{\sigma}}<nt\}$, define
$$
y_{j}^{\tilde{\sigma}}=\left\{\begin{array}{cl}
x_i, & \text{if}~\ell_j^{\tilde{\sigma}}=u_i, \\
 -x_i, & \text{if}~\ell_j^{\tilde{\sigma}}=v_i,
\end{array}\right.
$$
for $i=1,2, \cdots, m$ and $j=1,2,\cdots,2m$. Moreover, on $D^{\tilde{\sigma}}_{nt}$ let
\begin{align*}
\{J^{\tilde{\sigma}}(1),J^{\tilde{\sigma}}(2),\cdots,J^{\tilde{\sigma}}(m):J^{\tilde{\sigma}}(1)>J^{\tilde{\sigma}}(2)>\cdots>J^{\tilde{\sigma}}(m)\}\subseteq\{1,2, \cdots, 2m\}
\end{align*}
 satisfying that for each $i\in\{1,2,\cdots,m\}$, we have $y_{J^{\tilde{\sigma}}(i)}^{\tilde{\sigma}}=x_j$ for some $i\in\{1,2,\cdots,m\}$ and let
\begin{align*}
\{\bar{J}^{\tilde{\sigma}}(1),\bar{J}^{\tilde{\sigma}}(2),\cdots,\bar{J}^{\tilde{\sigma}}(m):\bar{J}^{\tilde{\sigma}}(1)>\bar{J}^{\tilde{\sigma}}(2)>\cdots>\bar{J}^{\tilde{\sigma}}(m)\}\subseteq\{1,2, \cdots, 2m\}
\end{align*}
satisfying $$\{\bar{J}^{\tilde{\sigma}}(1),\bar{J}^{\tilde{\sigma}}(2),\cdots,\bar{J}^{\tilde{\sigma}}(m)\}\cap\{J^{\tilde{\sigma}}(1),J^{\tilde{\sigma}}(2),\cdots,J^{\tilde{\sigma}}(m)\}=\emptyset.$$

Here by the defintion of $y^{\tilde{\sigma}}_p$, $p=1,\cdots,m$,  we can get $x_1,\cdots,x_m$ are linear combinations of $\sum\limits_{p=J^{\tilde{\sigma}}(1)}^{2m}y^{\tilde{\sigma}}_p(x),\cdots,\sum\limits_{p=J^{\tilde{\sigma}}(m)}^{2m}y^{\tilde{\sigma}}_p(x)$ and so that $\sum\limits_{p=\bar{J}^{\tilde{\sigma}}(1)}^{2m}y^{\tilde{\sigma}}_p(x),\cdots,\sum\limits_{p=\bar{J}^{\tilde{\sigma}}(m)}^{2m}y^{\tilde{\sigma}}_p(x)$ are also linear combinations of $\sum\limits_{p=J^{\tilde{\sigma}}(1)}^{2m}y^{\tilde{\sigma}}_p(x),\cdots,\sum\limits_{p=J^{\tilde{\sigma}}(m)}^{2m}y^{\tilde{\sigma}}_p(x)$.

Then letting $z_0=0$ and using coordinate transform $z_i=\sum\limits_{p=J^{\tilde{\sigma}}(i)}^{2m}y^{\tilde{\sigma}}_p(x)$, $u_i=\ell_{J^{\tilde{\sigma}}(i)}^{\tilde{\sigma}}-\ell_{J^{\tilde{\sigma}}(i)-1}^{\tilde{\sigma}}$ for $i=1,\cdots,m$, and $u_i=\ell_{\bar{J}^{\tilde{\sigma}}(i-m)}^{\tilde{\sigma}}-\ell_{\bar{J}^{\tilde{\sigma}}(i-m)-1}^{\tilde{\sigma}}$ for $i=m+1,\cdots,2m$, we have
\begin{equation}\label{EFn}
	\begin{split}
	\mathbb{E}(F_n(t))^m=\frac{1}{(2\pi)^m n^{\frac{3m}{2}-\frac{m}{2\alpha}}}\sum_{\tilde{\sigma}\in\tilde{\mathcal{P}}_{2m}}\int_{O_{2m}}\int_{\mathbb{R}^m}\prod\limits_{j=1}^m\widehat{f}(z_{j}-z_{j-1})e^{-|z_j|^{\alpha}u_j-|Z_j^{\tilde{\sigma}}(z)|^{\alpha}u_{j+m}}dzdu,
	\end{split}
\end{equation}
where $O_{2m}$ is that in \eqref{O2m} and $Z_j^{\tilde{\sigma}}(z)=\sum\limits_{p=\bar{J}^{\tilde{\sigma}}(j)}^{2m}y^{\tilde{\sigma}}_p(x)$, $j=1,\cdots,m$ are linear combinations of $z_1=\sum\limits_{p=J^{\tilde{\sigma}}(1)}^{2m}y^{\tilde{\sigma}}_p(x),\cdots,z_m=\sum\limits_{p=J^{\tilde{\sigma}}(m)}^{2m}y^{\tilde{\sigma}}_p(x)$.

To get a bound for $\prod\limits_{j=1}^m\widehat{f}(z_{j}-z_{j-1})$, let $I_{m,0}(z)=\prod\limits_{j=1}^m\widehat{f}(z_{j}-z_{j-1})$ and for $k=1,2,\cdots,m-1$, define
\begin{align*}
I_{m,k}(z)=
\begin{cases}
\prod\limits_{j=1}^{\frac{k+1}{2}}|\widehat{f}(z_{2j-1})|^2\prod\limits_{j=k+2}^m\widehat{f}(z_{j}-z_{j-1}),\,\,\,\,\,\, \qquad\qquad\qquad \text{if ~$k$ ~is ~odd},\\
\prod\limits_{j=1}^{\frac{k}{2}}|\widehat{f}(z_{2j-1})|^2\widehat{f}(z_{k+1})\prod\limits_{j=k+2}^m\widehat{f}(z_{j}-z_{j-1}),\qquad\qquad \text{if ~$k$ ~is ~even}.
\end{cases}
\end{align*}
We have
\begin{align}\label{chain}
|I_{m,0}(z)-I_{m,m-1}(z)|\leq\sum\limits_{j=1}^{m-1}|I_{m,j-1}(z)-I_{m,j}(z)|
\end{align} and for $k=1,2,\cdots,m-1$, let
\begin{align*}
|J_{m,k-1}^{n}-J_{m,k}^n|:=\frac{1}{(2\pi)^m n^{\frac{3m}{2}-\frac{m}{2\alpha}}}\int_{O_{2m}}\int_{\mathbb{R}^m}|I_{m,k-1}(z)-I_{m,k}(z)|\prod_{j=1}^{m}e^{-|z_j|^\alpha u_j}dzdu,
\end{align*}
and
\begin{align*}
J_{m,m-1}^{n}=\frac{1}{(2\pi)^m n^{\frac{3m}{2}-\frac{m}{2\alpha}}}\int_{O_{2m}}\int_{\mathbb{R}^m}|I_{m,m-1}(z)|\prod_{j=1}^{m}e^{-|z_j|^\alpha u_j}dzdu,
\end{align*}
Next, we use the chaining argument introduced in \cite{NuXu} and show $\sum\limits_{j=1}^{m-1}|J^n_{m,j-1}-J^n_{m,j}|\to0$ and $J^n_{m,m-1}\to0$ (when $m$ is odd), as $n\to\infty$. Then, only $J^n_{m,m-1}$ ($m$ is even) plays a role in the final limit law. In the following part, let $x\wedge y=\min\{x,y\}$ and $x\vee {y}=\max\{x,y\}$ and $\lfloor{m}\rfloor$ be the integer part for $m\ge0$.
\begin{proposition}\label{sec3.2-prop}
	There exist positive constants $c$ and $\gamma$ independent of $n$, such that
	$$\sum_{k=1}^{m-1}|J^n_{m,k-1}-J^n_{m,k}|\leq c\,n^{-\gamma}$$
	and
	\begin{align*}
	|J^n_{m,m-1}|\leq
	\begin{cases}
	c\,n^{-\gamma}, \qquad\qquad \text{if ~$m$ ~is ~odd},\\
	c, \,\,\,\,\,\,\,\,\,\,\qquad\qquad \text{if ~$m$ ~is ~even}.\\
	\end{cases}
	\end{align*}
\end{proposition}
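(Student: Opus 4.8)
The plan is to run the chaining argument of \cite{NuXu} on the telescoping sum \eqref{chain}, so that the conversion of each consecutive pair $\widehat{f}(z_j-z_{j-1})$ into a squared modulus $|\widehat{f}(z_{2j-1})|^2$ generates a controllable error at every step. The only analytic facts I would invoke are those in Remark \ref{sec1-remark}: since $\int_{\mathbb R}|xf(x)|dx<\infty$, $\widehat{f}$ is bounded and Lipschitz, and since $\int_{\mathbb R}f(x)dx=0$ one has $\widehat{f}(0)=0$, so that
$$|\widehat{f}(x)|\le c\,(|x|^{\beta}\wedge 1),\qquad |\widehat{f}(x)-\widehat{f}(y)|\le c\,(|x-y|^{\beta}\wedge 1),\qquad \beta\in[0,1].$$
Combined with \eqref{sec1-eq1.3}--\eqref{sec1-eq1.3-1}, these produce the two finite ``building-block'' integrals $\int_0^{\infty}\!\int_{\mathbb R}|\widehat{f}(z)|^2 e^{-|z|^{\alpha}u}\,dz\,du<\infty$ and $\int_0^{t}\!\int_{\mathbb R}e^{-|z|^{\alpha}u}\,dz\,du<\infty$ already used in Lemmas \ref{sec2-lem1-m}--\ref{sec2-lem1-m'}. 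Throughout, $1<\alpha<2$ guarantees $\alpha-1<1$, so a H\"older exponent $\beta\in(\alpha-1,1]$ is available; this is the choice I would fix.

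I would first dispose of the surviving term $J^n_{m,m-1}$. When $m$ is even, $I_{m,m-1}(z)=\prod_{j=1}^{m/2}|\widehat{f}(z_{2j-1})|^2$ and $J^n_{m,m-1}$ is literally the quantity of Lemma \ref{sec2-lem1-m}; hence it converges and in particular $|J^n_{m,m-1}|\le c$. When $m$ is odd, $I_{m,m-1}(z)=\prod_{j=1}^{(m-1)/2}|\widehat{f}(z_{2j-1})|^2\,\widehat{f}(z_m)$ carries one extra single factor. Each squared factor and the single factor integrate against their own time variable to a finite constant (by the building-block integrals, the single factor being integrable precisely because $\alpha>1$), while the $(m-1)/2$ variables carrying no factor each contribute $u_j^{-1/\alpha}$; integrating these out together with the $m$ free times via the Beta-type identity (Lemma 4.1 of \cite{hx}) yields a power $(nt)^{\frac{3m-1}{2}-\frac{m-1}{2\alpha}}$, short of the normalization $n^{\frac{3m}{2}-\frac{m}{2\alpha}}$ by exactly $\frac{\alpha-1}{2\alpha}>0$. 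I would therefore take $\gamma=\frac{\alpha-1}{2\alpha}$ and conclude $|J^n_{m,m-1}|\le c\,n^{-\gamma}$ for $m$ odd.

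For the telescoping differences I would expand $I_{m,k-1}(z)-I_{m,k}(z)$ by hand. Since $f$ is real, $\overline{\widehat{f}(z_k)}=\widehat{f}(-z_k)$, and the two consecutive products share a common factor, differing only through a single bracket $\widehat{f}(z_{k+1}-z_k)-\widehat{f}(-z_k)$ (for $k$ odd) or $\widehat{f}(z_{k+1}-z_k)-\widehat{f}(z_{k+1})$ (for $k$ even); the Lipschitz/H\"older estimate bounds it by $c\,(|z_{k+1}|^{\beta}\wedge 1)$, respectively $c\,(|z_k|^{\beta}\wedge 1)$. Thus each difference is dominated by the already-converted squared product times one small modulus-of-continuity factor pinned to a single ``chaining variable,'' together with the untouched tail chain $\prod_{j\ge k+2}\widehat{f}(z_j-z_{j-1})$. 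The decay then comes from a structural power deficit: the chaining factor makes the chaining variable's time integrable (weight effectively $0$, using $\beta>\alpha-1$), so the difference at step $k$ carries strictly fewer free-weight variables than the diagonal term $I_{m,m-1}$, and a Beta-identity power count gives a deficit at least $\frac{\alpha-1}{2\alpha}$. To make this rigorous I would localize the chaining variable by the rescaling $z\mapsto n^{-1/\alpha}z$, $u\mapsto nu$ of the proof of Lemma \ref{sec2-lem1-m'}, under which $e^{-|z|^{\alpha}u}$ is invariant and $(|z|^{\beta}\wedge 1)$ becomes $(n^{-\beta/\alpha}|z|^{\beta}\wedge 1)\to0$, and close by dominated convergence against the building-block integrals; summing the finitely many $k$ preserves the rate $n^{-\gamma}$.

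The step I expect to be the main obstacle is the uniform, \emph{non-crude} control of the tail chain $\prod_{j\ge k+2}\widehat{f}(z_j-z_{j-1})$. Bounding each tail factor by $\|f\|_{L^1}$ turns each tail variable into a free $u_j^{-1/\alpha}$ weight, and a power count shows this is already too lossy once $m$ is moderately large, so the decay cannot be read off from the chaining factor alone. The remedy is to integrate the tail iteratively from the outermost variable inward: integrating $z_j$ against $e^{-|z_j|^{\alpha}u_j}$ localizes near $z_j=0$ for large $u_j$ and brings down a factor $\widehat{f}(-z_{j-1})$, which vanishes at $z_{j-1}=0$, so the vanishing $\widehat{f}(0)=0$ propagates down the chain and only the single outermost tail variable keeps the weak $u^{-1/\alpha}$ weight. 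The delicate point is that this localization is a large-$u$ phenomenon, so the argument must be organized (by a split into small- and large-time regions, or by the rescaling above) so that the small-time contribution is genuinely negligible; verifying that the resulting exponent $\gamma$ stays positive uniformly in $k$ is exactly where $1<\alpha<2$ and the integrability \eqref{sec1-eq1.3}--\eqref{sec1-eq1.3-1} are essential.
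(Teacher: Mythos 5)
Your handling of $J^n_{m,m-1}$ coincides with the paper's: for $m$ even it is the bounded quantity of Lemma \ref{sec2-lem1-m}, and for $m$ odd the leftover factor $|\widehat f(z_m)|\le c(|z_m|\wedge 1)$ upgrades one variable from the weight $n^{1-\frac1\alpha}$ to the finite integral $\int_0^\infty\int_{\mathbb R}(|z|\wedge1)e^{-|z|^\alpha u}\,dz\,du=\int_{\mathbb R}(|z|\wedge1)|z|^{-\alpha}dz<\infty$, giving exactly the deficit $n^{-\frac12(1-\frac1\alpha)}=n^{-\frac{\alpha-1}{2\alpha}}$ you compute. Your identification of the single bracket $\widehat f(z_{k+1}-z_k)-\widehat f(z_{k+1})$ (for $k$ even), resp. $\widehat f(z_{k+1}-z_k)-\widehat f(-z_k)$ (for $k$ odd, using $|\widehat f(z_k)|^2=\widehat f(z_k)\widehat f(-z_k)$), and its bound by $c(|z_k|\wedge1)$, resp. $c(|z_{k+1}|\wedge1)$, also matches the paper, as does your diagnosis that bounding the unconverted tail by a constant is too lossy for $k<m-1$.

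The genuine gap is exactly the step you flag as the main obstacle: control of the tail $\prod_{j\ge k+2}\widehat f(z_j-z_{j-1})$. Your proposed remedy --- integrating the tail iteratively from the outside in so that $\widehat f(0)=0$ ``propagates down the chain'' --- is never carried out, and as you concede the localization $\int_{\mathbb R}\widehat f(z_j-z_{j-1})e^{-|z_j|^\alpha u_j}dz_j\approx\widehat f(-z_{j-1})\,c\,u_j^{-1/\alpha}$ is only a large-$u_j$ phenomenon; organizing the small-/large-time split so that the exponent stays positive uniformly in $k$ (and doing so without assuming $\widehat f\in L^1$, which is not guaranteed by the hypotheses) is precisely where the difficulty sits, and it is left open. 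The paper closes this with a purely pointwise, combinatorial device that requires no integration at all: from $|\widehat f(x)|\le c(|x|\wedge1)$ and the triangle inequality one gets $|\widehat f(z_j-z_{j-1})|\le c\bigl((|z_j|\wedge1)+(|z_{j-1}|\wedge1)\bigr)$; expanding the tail product over the choices $p_j\in\{0,1\}$ of which neighbour absorbs each factor, one observes that consecutive factors share a variable, so two adjacent ``unpinned'' positions are impossible and every term of the expansion pins at least $\lfloor\frac{m-k-1}{2}\rfloor$ of the tail variables. Adding the $\frac k2$ (or $\frac{k-1}{2}$) squared factors and the one bracket factor, each difference term has strictly more than $\frac m2$ pinned variables, hence a power deficit of at least $\frac12(1-\frac1\alpha)$. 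Replacing your iterative-integration heuristic by this expansion-and-counting argument is the missing ingredient; with it, the rest of your outline goes through and your value $\gamma=\frac{\alpha-1}{2\alpha}$ is correct.
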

\begin{proof}
We have
\begin{align*}
|I_{m,m-1}(z)|\le
\begin{cases}
\prod\limits_{j=1}^{\frac{m}{2}}|\widehat{f}(z_{2j-1})|^2\le c\prod\limits_{j=1}^{\frac{m}{2}}\left(|z_{2j-1}|\wedge 1\right),\qquad\qquad\qquad\qquad\,\,\,\text{if ~$m$ ~is ~even},\\
\prod\limits_{j=1}^{\frac{m-1}{2}}|\widehat{f}(z_{2j-1})|^2|\widehat{f}(z_{m})|\le c\prod\limits_{j=1}^{\frac{m-1}{2}}\left(|z_{2j-1}|\wedge 1\right)\left(|z_{m}|\wedge 1\right),\,\,\,\,\text{if ~$m$ ~is ~odd}.
\end{cases}
\end{align*}
Here we use $|\widehat{f}(x)|\leq c(|x|\wedge 1)^{\beta}$ for $\beta\in[0,1]$ which can de deduced from $\int_{\mathbb{R}}|xf(x)|dx<\infty$ and $\int_{\mathbb{R}}f(x)dx=0$. Because $\frac{1}{n^{1-\frac{1}{\alpha}}}\int_0^{nt}\int_{\mathbb{R}}e^{-|z|^{\alpha}u}dzdu=\int_{\mathbb{R}}e^{-|z|^\alpha}dz\times\int_{0}^{t}u^{-\frac{1}{\alpha}}du<\infty$
and
$$\int_0^{nt}\int_{\mathbb{R}}\left(|z|\wedge 1\right)e^{-|z|^{\alpha}u}dzdu<\int_0^{\infty}\int_{\mathbb{R}}\left(|z|\wedge 1\right)e^{-|z|^{\alpha}u}dzdu=\int_{\mathbb{R}}\left(|z|\wedge 1\right)|z|^{-\alpha}dz<\infty,$$
we have
\begin{align*}
|J^n_{m,m-1}|&\leq\frac{1}{(2\pi)^m n^{\frac{3m}{2}-\frac{m}{2\alpha}}}\int_{[0,nt]^m}\int_{\mathbb{R}^m}|I_{m,m-1}(z)|\prod_{j=1}^{m}e^{-|z_j|^\alpha u_j}dzdu\\
&\leq
\begin{cases}
c\,n^{-\frac{1}{2}(1-\frac{1}{\alpha})}, \qquad\qquad\text{if ~$m$ ~is ~odd},\\
c, \qquad\qquad\qquad\qquad \text{if ~$m$ ~is ~even}.\\
\end{cases}
\end{align*}
Then we turn to $|I_{m,k-1}(z)-I_{m,k}(z)|$'s. If $k$ is even, we have
\begin{align*}
|I_{m,k-1}(z)-I_{m,k}(z)|&\leq \prod\limits_{j=1}^{\frac{k}{2}}|\widehat{f}(z_{2j-1})|^2|\widehat{f}(z_{k+1})-\widehat{f}(z_{k+1}-z_{k})|\prod\limits_{j=k+2}^m|\widehat{f}(z_{j}-z_{j-1})|\\
&\leq c\,\prod\limits_{j=1}^{\frac{k}{2}}(|z_{2j-1}|\wedge1)(|z_{k}|\wedge1)\prod_{j=k+2}^{m}\left((|z_{j}|\wedge1)+(|z_{j-1}|\wedge1)\right)\\&\leq c\,\sum_{p_{k+2},\cdots,p_m}\prod\limits_{j=1}^{\frac{k}{2}}(|z_{2j-1}|\wedge1)(|z_{k}|\wedge1)\prod_{j=k+1}^{m}(|z_{j}|\wedge1)^{ p_j\vee\bar{p}_{j+1}},
\end{align*}
where $p_{k+2},\cdots,p_m\in\{0,1\}$, ${p}_{k+1}=\bar{p}_{m+1}=0$ and $p_{j}+\bar{p}_{j}=1$ for $j=k+2,\cdots,m$. If $p_j\vee \bar{p}_{j+1}=0$ for $j=k+1,\cdots,m-1$, we have $p_{j+1}\vee \bar{p}_{j+2}=1$ and if $p_j\vee \bar{p}_{j+1}=0$ for $j=k+2,\cdots,m$, we have $p_{j-1}\vee \bar{p}_{j}=1$, which leads to the fact that $\sum\limits_{j=k+1}^{m}p_j\vee \bar{p}_{j+1}\le\lfloor\frac{m-k+1}{2}\rfloor$. So that
$$\frac{k}{2}+1+\sum\limits_{j=k+1}^{m}p_j\vee \bar{p}_{j+1}\le\lfloor\frac{m+1}{2}+1\rfloor$$
and we have $|J^n_{m,k-1}-J^n_{m,k}|\le c\,n^{-(1-\frac{1}{\alpha})}$. And if $k$ is odd, we have
\begin{align*}
|I_{m,k-1}(z)-&I_{m,k}(z)|\leq \prod\limits_{j=1}^{\frac{k-1}{2}}|\widehat{f}(z_{2j-1})|^2||\widehat{f}(z_{k})||\widehat{f}(-z_{k})-\widehat{f}(z_{k+1}-z_{k})|\prod\limits_{j=k+2}^m|\widehat{f}(z_{j}-z_{j-1})|\\
&\leq c\,\prod\limits_{j=1}^{\frac{k-1}{2}}(|z_{2j-1}|\wedge1)(|z_{k}|\wedge1)(|z_{k+1}|\wedge1)\prod_{j=k+2}^{m}\left((|z_{j}|\wedge1)+(|z_{j-1}|\wedge1)\right)\\&\le c\,\sum_{p_{k+2},\cdots,p_m}\prod\limits_{j=1}^{\frac{k-1}{2}}(|z_{2j-1}|\wedge1)(|z_{k}|\wedge1)\prod_{j=k+1}^{m}(|z_{j}|\wedge1)^{p_j\vee\bar{p}_{j+1}},
\end{align*}
where $p_{k+2},\cdots,p_m\in\{0,1\}$, ${p}_{k+1}=1,\bar{p}_{m+1}=0$ and $p_{j}+\bar{p}_{j}=1$ for $j=k+2,\cdots,m$. Similarly, if $p_j\vee \bar{p}_{j+1}=0$ for $j=k+1,\cdots,m-1$, we have $p_{j+1}\vee \bar{p}_{j+2}=1$ and if $p_j\vee \bar{p}_{j+1}=0$ for $j=k+2,\cdots,m$, we have $p_{j-1}\vee \bar{p}_{j}=1$. Then $\sum\limits_{j=k+1}^{m}p_j\vee \bar{p}_{j+1}\le\lfloor\frac{m-k}{2}\rfloor$ comes from $p_{k+1}\vee \bar{p}_{k+2}=1$. So that
$$\frac{k-1}{2}+1+\sum\limits_{j=k+1}^{m}p_j\le\lfloor\frac{m-1}{2}+1\rfloor$$
and we have $|J^n_{m,k-1}-J^n_{m,k}|\le c\,n^{-\frac{1}{2}(1-\frac{1}{\alpha})}$.
This completes the proof.
\end{proof}

Now we prove the limit properties of $\mathbb{E}(F_n(t))^m$ as $n\to\infty$.

\begin{proposition}\label{sec3.1-prop2}
Suppose that $f$ is bounded, $\int_{\mathbb{R}}|xf(x)|dx<\infty$ and $\int_{\mathbb{R}}f(x)dx=0$. Then for any $t>0$,
$$F_n(t)\overset{law}{\to}\left(\frac1{4\pi^2}\int_{\mathbb{R}}|\widehat{f}(x)|^2|x|^{-\alpha}dx\right)^{\frac12}Z(t),$$
as $n\to\infty$, where $\widehat{f}$ is the Fourier transform of $f$ and $Z(t)$ is a random variable with parameter $t>0$ and $\mathbb{E}(Z(t))^m=\frac{m!\Gamma^{\frac{m}{2}}(1-\frac1\alpha)\left(\int_{\mathbb{R}}e^{-|x|^\alpha}dx\right)^{\frac{m}{2}}}{\Gamma(\frac{3m}{2}+1-\frac{m}{2\alpha})}t^{\frac{3m}{2}-\frac{m}{2\alpha}}$ if $m$ is even, $\mathbb{E}(Z(t))^m=0$ if $m$ is odd.
\end{proposition}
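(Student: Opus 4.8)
The plan is to prove the convergence in law by the method of moments: I will compute $\lim_{n\to\infty}\mathbb{E}(F_n(t))^m$ for every $m\in\mathbb{N}$ and identify it with the $m$-th moment of $\left(\frac1{4\pi^2}\int_{\mathbb{R}}|\widehat{f}(x)|^2|x|^{-\alpha}dx\right)^{1/2}Z(t)$. The prescribed moments of $Z(t)$ grow slowly enough (the Gamma factor $\Gamma(\tfrac{3m}{2}+1-\tfrac{m}{2\alpha})$ in the denominator dominates the numerator), so Carleman's condition holds and the limiting law is determined by its moments; hence moment convergence gives convergence in distribution. The starting point is the exact expansion \eqref{EFn}, which writes $\mathbb{E}(F_n(t))^m$ as a finite sum over $\tilde{\sigma}\in\tilde{\mathcal{P}}_{2m}$ of integrals over $O_{2m}\times\mathbb{R}^m$ with integrand $\prod_{j=1}^m\widehat{f}(z_j-z_{j-1})\,e^{-|z_j|^\alpha u_j-|Z_j^{\tilde{\sigma}}(z)|^\alpha u_{j+m}}$.

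First I would insert the chaining decomposition \eqref{chain} into \eqref{EFn}, replacing $\prod_{j=1}^m\widehat{f}(z_j-z_{j-1})=I_{m,0}(z)$ by $I_{m,m-1}(z)$ together with the telescoping differences $I_{m,k-1}(z)-I_{m,k}(z)$, $k=1,\dots,m-1$. On every summand governed by such a difference the bound $e^{-|Z_j^{\tilde{\sigma}}(z)|^\alpha u_{j+m}}\le1$ reduces it to the quantities $|J^n_{m,k-1}-J^n_{m,k}|$; since there are only $(2m)!/2^m$ permutations, Proposition \ref{sec3.2-prop} yields $\sum_{k=1}^{m-1}|J^n_{m,k-1}-J^n_{m,k}|\le c\,n^{-\gamma}\to0$, so all telescoping contributions vanish. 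When $m$ is odd, Proposition \ref{sec3.2-prop} further gives $|J^n_{m,m-1}|\le c\,n^{-\gamma}\to0$, whence $\mathbb{E}(F_n(t))^m\to0$, which matches the vanishing odd moments of $Z(t)$.

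The substantive case is $m$ even, where only the leading chaining term survives and, for each $\tilde{\sigma}$, one must analyse $\frac1{(2\pi)^m n^{\frac{3m}{2}-\frac{m}{2\alpha}}}\int_{O_{2m}}\int_{\mathbb{R}^m}\prod_{j=1}^{m/2}|\widehat{f}(z_{2j-1})|^2\prod_{j=1}^m e^{-|z_j|^\alpha u_j-|Z_j^{\tilde{\sigma}}(z)|^\alpha u_{j+m}}\,dz\,du$, where the $Z$-exponentials can no longer be discarded. The key step is to classify the permutations by the linear structure of the vectors $Z_j^{\tilde{\sigma}}(z)$: when some $Z_j^{\tilde{\sigma}}$ is a genuine sum of several of the $z_i$, the corresponding integral is dominated by an integrand of precisely the $\min$-form treated in Lemma \ref{sec2-lem1-m'} and therefore contributes $0$ in the limit, whereas for the remaining ``diagonal'' permutations the coordinate bookkeeping collapses the integral to exactly the clean quantity evaluated in Lemma \ref{sec2-lem1-m}. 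Each surviving permutation then contributes $\frac1{(2\pi)^m}\cdot\frac{\Gamma^{m/2}(1-\frac1\alpha)\left(\int_{\mathbb{R}}e^{-|x|^\alpha}dx\right)^{m/2}}{\Gamma(\frac{3m}{2}+1-\frac{m}{2\alpha})}t^{\frac{3m}{2}-\frac{m}{2\alpha}}\left(\int_{\mathbb{R}}|\widehat{f}(x)|^2|x|^{-\alpha}dx\right)^{m/2}$.

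The main obstacle I expect lies in this final combinatorial bookkeeping: pinning down exactly which $\tilde{\sigma}\in\tilde{\mathcal{P}}_{2m}$ are diagonal, checking that each non-diagonal permutation is genuinely dominated by a term of the shape appearing in Lemma \ref{sec2-lem1-m'}, and counting the survivors so as to produce the factor $m!$. Summing the surviving contributions and using $\frac1{4\pi^2}=(2\pi)^{-2}$ then gives $\mathbb{E}(F_n(t))^m\to\left(\frac1{4\pi^2}\int_{\mathbb{R}}|\widehat{f}(x)|^2|x|^{-\alpha}dx\right)^{m/2}\mathbb{E}(Z(t))^m$, which completes the moment computation and hence the proof.
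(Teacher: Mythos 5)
Your proposal is correct and follows essentially the same route as the paper: the chaining decomposition \eqref{chain} combined with Proposition \ref{sec3.2-prop} to discard the telescoping terms and the odd-$m$ case, then for even $m$ the split of the permutations into the $m!$ ``diagonal'' ones in $\tilde{\mathcal{P}}_{2m}^0$ (handled by Lemma \ref{sec2-lem1-m}) and the rest (dominated by the $\min$-form integrand of Lemma \ref{sec2-lem1-m'}). Your explicit appeal to Carleman's condition to justify that the limiting law is moment-determined is a small point the paper leaves implicit, but otherwise the arguments coincide.
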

\begin{proof}
	For odd $m$, we have
	\begin{align*}
		\mathbb{E}(F_n(t))^m\le\frac{(2m)!}{2^m}\left(\sum\limits_{j=1}^{m-1}|J^n_{m,j-1}-J^n_{m,j}|+J^{n}_{m,m-1}\right)
	\end{align*}
	because there are $\frac{(2m)!}{2^m}$ elements in $\tilde{\mathcal{P}}_{2m}$. By proposition \ref{sec3.2-prop}, we have $\mathbb{E}(F_n(t))^m\le cn^{-\gamma}$ so that $\lim\limits_{n\to\infty}\mathbb{E}(F_n(t))^m=0$.
	
	Then for even $m$, denote
	\begin{equation}\label{Gn}
	\begin{split}
	G_n=\frac{1}{(2\pi)^m n^{\frac{3m}{2}-\frac{m}{2\alpha}}}\sum_{\tilde{\sigma}\in\tilde{\mathcal{P}}_{2m}}\int_{O_{2m}}\int_{\mathbb{R}^m}I_{m,m-1}(z)\prod_{j=1}^{m}e^{-|z_j|^{\alpha}u_j-|Z_j^{\tilde{\sigma}}(z)|^{\alpha}u_{j+m}}dzdu,
	\end{split}
	\end{equation}
	by proposition \ref{sec3.2-prop}, we have that
	\begin{align*}
		|\mathbb{E}(F_n(t))^m-G_n|\le c\sum\limits_{j=1}^{m-1}|J^n_{m,j-1}-J^n_{m,j}|\le cn^{-\gamma},
	\end{align*}
	so that $\lim\limits_{n\to\infty}|\mathbb{E}(F_n(t))^m-G_n|=0$.	To calculate the limit of $G_n$, we divide $\tilde{\mathcal{P}}_{2m}$ into two parts by $\tilde{\mathcal{P}}_{2m}^0\subseteq\tilde{\mathcal{P}}_{2m}$, s.t. for any $\tilde{\sigma}\in\tilde{\mathcal{P}}_{2m}^0$, we have $$\{J^{\tilde{\sigma}}(1),J^{\tilde{\sigma}}(2),\cdots,J^{\tilde{\sigma}}(m)\}=\{2,4,\cdots,2m\}.$$
	where there are $m!$ elements in $\tilde{\mathcal{P}}_{2m}^0\subseteq\tilde{\mathcal{P}}_{2m}$. For $\tilde{\sigma}\in\tilde{\mathcal{P}}_{2m}^0$, we have  $Z_j^{\tilde{\sigma}}(z)=0$ for all $j=1,\cdots,m$ and for $\tilde{\sigma}\notin\tilde{\mathcal{P}}_{2m}^0$, we have two cases for $Z_1^{\tilde{\sigma}}(z)$, one is that $Z_1^{\tilde{\sigma}}(z)=z_{h}-z_{h-1}$ for some $h\in\{1,2,\cdots,m\}$ and the other is that $Z_1^{\tilde{\sigma}}(z)=z_j-(z_{h}-z_{h-1})$ for some $h,j\in\{1,2,\cdots,m\}$ with $h\neq j$. Then we divide \eqref{Gn} into two parts:
	\begin{align*}
		G_n&=\frac{1}{(2\pi)^m n^{\frac{3m}{2}-\frac{m}{2\alpha}}}\sum_{\tilde{\sigma}\in\tilde{\mathcal{P}}_{2m}^0}\int_{O_{2m}}\int_{\mathbb{R}^m}\prod_{j=1}^{\frac{m}{2}}|\widehat{f}(z_{2j-1})|^2\prod_{j=1}^{m}e^{-|z_j|^{\alpha}u_j}dzdu\\
&\qquad+\frac{1}{(2\pi)^m n^{\frac{3m}{2}-\frac{m}{2\alpha}}}\sum_{\tilde{\sigma}\notin\tilde{\mathcal{P}}_{2m}^0}\int_{O_{2m}}\int_{\mathbb{R}^m}\prod_{j=1}^{\frac{m}{2}}|\widehat{f}(z_{2j-1})|^2\prod_{j=1}^{m}e^{-|z_j|^{\alpha}u_j-|Z_j^{\tilde{\sigma}}(z)|^{\alpha}u_{j+m}}dzdu\\
&=:G_{n,1}+G_{n,2}.
	\end{align*}
	By Lemma \ref{sec2-lem1-m}, we can see $$\lim\limits_{n\to\infty}G_{n,1}=\frac{m!\Gamma^{\frac{m}{2}}(1-\frac1\alpha)\left(\int_{\mathbb{R}}e^{-|x|^\alpha}dx\right)^{\frac{m}{2}}}{\Gamma(\frac{3m}{2}+1-\frac{m}{2\alpha})}t^{\frac{3m}{2}-\frac{m}{2\alpha}}\times\left(\frac1{4\pi^2}\int_{\mathbb{R}}|\widehat{f}(x)|^2|x|^{-\alpha}dx\right)^{\frac{m}{2}}.$$ Becasuse
	\begin{align*}
		G_{n,2}&\le\frac{1}{(2\pi)^m n^{\frac{3m}{2}-\frac{m}{2\alpha}}}\sum_{\tilde{\sigma}\notin\tilde{\mathcal{P}}_{2m}^0}\int_{O_{2m}}\int_{\mathbb{R}^m}\prod_{j=1}^{\frac{m}{2}}|\widehat{f}(z_{2j-1})|^2\prod_{j=1}^{m}e^{-|z_j|^{\alpha}u_j}e^{-|Z_1^{\tilde{\sigma}}(z)|^{\alpha}u_{m+1}}dzdu\\&\le \frac{c}{n^{m\frac{3\alpha-1}{2\alpha}}}\int_{\mathbb{R}^m}\int_{O_{2m}}\prod_{j=1}^{\frac{m}{2}}|\widehat{f}(z_{2j-1})|^2\prod_{j=1}^{m}e^{-|z_j|^\alpha u_j}e^{-\min\{|z_1+z_2|^{\alpha},|z_1+z_2+z_3|^{\alpha},|z_1+z_2+z_4|^{\alpha}\}u_{m+1}}du dz,
	\end{align*}
	by Lemma \ref{sec2-lem1-m'}, we have $\lim\limits_{n\to\infty}G_{n,2}=0$, which completes the proof.

\end{proof}

\bigskip
\textbf{Acknowledgements}
Qian Yu would like to thank Prof. Greg Markowsky for valuable conversations.
\bigskip

\bigskip
$\begin{array}{cc}
\begin{minipage}[t]{1\textwidth}
{\bf Minhao Hong}\\
College of Arts and Sciences, Shanghai Maritime University, Shanghai 201306, China\\
\texttt{mhhong@shmtu.edu.cn}
\end{minipage}
\hfill
\end{array}$

$\begin{array}{cc}
\begin{minipage}[t]{1\textwidth}
{\bf Qian Yu}\\
Department of Mathematics, Nanjing University of Aeronautics and Astronautics, Nanjing
211106, China\\
Key Laboratory of Mathematical Modelling and High Performance Computing of Air Vehicles (NUAA), MIIT, Nanjing 211106, China\\
\texttt{qyumath@163.com}
\end{minipage}
\hfill
\end{array}$

\end{document}